\newcommand{\norm}[1]{\left\lVert#1\right\rVert}
\newcommand{\innerpro}[1]{\left\langle#1\right\rangle}
\newcommand{\bX}{\mathbf{X}}
\newcommand{\lam}{\lambda}
\newcommand{\sxp}{S_{X_p}}
\newcommand{\sxq}{S_{X_q}}
\newcommand{\hk}{{\mathcal{H}_K}}
\newcommand{\ie}{i.e. }
\DeclareSymbolFont{bbm}{U}{msb}{m}{n}
\DeclareSymbolFontAlphabet{\mathbbm}{bbm}
\def\R{\mathbbm R}
\def\N{\mathbbm N}
\newtheorem{assumption}[theorem]{Assumption}
\begin{document}

\title{On regularized Radon-Nikodym differentiation}

\author{\name Duc Hoan Nguyen \email duc.nguyen@ricam.oeaw.ac.at \\
        \name Werner Zellinger \email werner.zellinger@ricam.oeaw.ac.at \\
        \name Sergei Pereverzyev \email sergei.pereverzyev@oeaw.ac.at \\
       \addr Johann Radon Institute for Computational and Applied Mathematics\\
       Austrian Academy of Sciences\\
       Altenberger Straße 69, 4040 Linz, Austria
}

\editor{}

\maketitle

\begin{abstract}
We discuss the problem of estimating Radon-Nikodym derivatives. This problem appears in various applications, such as covariate shift adaptation, likelihood-ratio testing, mutual information estimation, and conditional probability estimation. To address the above problem, we employ the general regularization scheme in reproducing kernel Hilbert spaces. The convergence rate of the corresponding regularized algorithm is established by taking into account both the smoothness of the derivative and the capacity of the space in which it is estimated. This is done in terms of general source conditions and the regularized Christoffel functions. We also find that the reconstruction of Radon-Nikodym derivatives at any particular point can be done with high order of accuracy. Our theoretical results are illustrated by numerical simulations. 
\end{abstract}

\begin{keywords}
  Density ratio, Reproducing kernel Hilbert space, Radon-Nikodym differentiation
\end{keywords}
 
\section{Introduction}
\label{sec:intro}
This paper is focused on the use of regularized kernel methods in the context of estimating the ratio of two probability density functions, which can also be called the Radon-Nikodym derivative of the corresponding probability measures.

Recently the estimation of Radon-Nikodym derivatives has gained significant attention due to its potential applications in such tasks as covariate shift adaptation, outlier detection, divergence estimation, and conditional probability estimation. Here we may refer to \cite{sugiyama2012book} and references therein. In order to address the above problem, various kernel-based approaches are available. In particular, several regularization schemes in reproducing Kernel Hilbert space (RKHS) can be employed \cite{nguyen2010, kanamori2012, que2013, schuster2020,gizewski2022}.

As can be seen from the above studies, the convergence of algorithms for Radon-Nikodym differentiation is influenced not only by the smoothness of the approximated function but also by the capacity of the approximating space. Though there are several studies that employed a particular regularization technique, such as Tikhonov–Lavrentiev regularization, to the best of our knowledge there is no study considering more general regularization schemes and taking into account both the above-mentioned factors, \ie smoothness and capacity. For example, in \cite{kanamori2012} and \cite{que2013} (see Type I setting there) only the capacity of the approximating space has been incorporated into error estimations, and in \cite{gizewski2022} and \cite{schuster2020} only the smoothness has been considered. 

Besides, since in some applications the point values of the Radon-Nikodym derivatives are of interest, it seems natural to study their approximation in spaces, where pointwise evaluations are well-defined. However, in \cite{kanamori2012} and \cite{que2013} the approximation has been analyzed in the space of integrable functions, where pointwise evaluations are not well-defined. 

In the present paper, we aim to overcome the above limitations.  More precisely, we study general regularization schemes and analyze their accuracy with respect to both the smoothness of the Radon-Nikodym derivative and the capacity of the RKHS in which it is estimated. This is done in terms of general source conditions and regularized Christoffel functions. We then establish accuracy bounds of the corresponding regularized algorithm in the norm of RKHS and pointwise.  Finally, we present some  numerical  illustrations supporting our theoretical results.

\section{Assumptions and auxiliaries}\label{sec:assum}


In the problem of estimation of Radon-Nikodym derivatives, we consider two probability measures $p$ and $q$ on a space $\mathbf{X} \subset \R^d$.
The information about the measures is only provided in the form of samples $X_p = \{x_1, x_2,\ldots,x_n\} $ and $X_q = \{x_1', x_2',\ldots,x_m'\}$ drawn independently and identically (i.i.d) from $p$ and $q$ respectively. Moreover, we assume that there is a function $\beta: \mathbf{X} \rightarrow [0,\infty) $, 
which can be viewed as the Radon-Nikodym derivative $\frac{dq}{dp}$ of the probability measure $q(x)$ with respect to the probability measure $p(x)$, and for any measurable set $ A \subset \mathbf{X} $ it holds
$$\int_A dq (x) = \int_A \beta (x) d p (x).$$  

Our goal is to approximate the Radon-Nikodym derivative $\beta (x) = \frac{dq}{dp}$ by some function $ \hat{\beta} (x)$ based on the observed samples. As it has been already explained in Introduction, we in fact need a strategy that ensures a good pointwise approximation to the derivatives $\beta (x)$. Then it seems to be logical to estimate $\beta (x)$ in the norm of some RKHS, in which pointwise evaluations are well-defined. 

Let $\mathcal{H}_K$ be a reproducing Kernel Hilbert space with a positive-definite function $K: \bX \times \bX \rightarrow \R$ as reproducing kernel. We assume that $K$ is a continuous and bounded function, such that for any $x \in \mathbf{X} $ $$\norm{K(\cdot,x)}_{\mathcal{H}_K} = \innerpro{K(\cdot,x),K(\cdot,x)}_{\mathcal{H}_K}^\frac{1}{2} = [K(x,x)]^\frac{1}{2} \leq \kappa_0 < \infty. $$
Let $L_{2,\rho}$ be the space of square-integrable functions $f: \mathbf{X} \rightarrow \mathbb{R}$ with respect to the probability measure $\rho$. We define $J_q: \mathcal{H}_K \hookrightarrow L_{2, q} $ and $J_p: \mathcal{H}_K \hookrightarrow L_{2, p} $ as the inclusion operators, such that for instance, $ J_q $ assigns to a function $ g \in \mathcal{H}_K $ the same function seen as an element of $ L_{2,q} $. In the sequel, we distinguish two sample operators 
\begin{align*}
	&S_{X_q} f = (f(x_1'), f(x_2'),\ldots,f(x_m')) \in \R^m,\\
	&S_{X_p} f = (f(x_1), f(x_2),\ldots,f(x_n)) \in \R^n,
\end{align*}
acting from $\mathcal{H}_K$ to $\R^m$ and $\R^n$, where the norms in later spaces are generated by $m^{-1}$-times and $n^{-1}$-times the standard Euclidean inner products, such that, for example, for $ u = (u_1,u_2,\ldots,u_m), w = (w_1,w_2,\ldots,w_m) \in \R^m,$
$$ \innerpro{u,w}_{\R^m} = \frac{1}{m} \sum_{j = 1}^m u_j w_j, \hspace*{0.5cm} \norm{u}_{\R^m} = \innerpro{u,u}_{\R^m}^{\frac{1}{2}} = \left(   \frac{1}{m} \sum_{j = 1}^m u_j^2 \right)^{\frac{1}{2}}. $$
Then the adjoint operators $S_{X_q}^*: \R^m \rightarrow \mathcal{H}_K$ and 
$S_{X_p}^*: \R^n \rightarrow \mathcal{H}_K$ are given as
\begin{align*}
	&S_{X_q}^* u(\cdot) = \frac{1}{m} \sum_{j = 1}^m K(\cdot, x_j') u_j, \hspace*{0.5cm} u = (u_1,u_2,\ldots,u_m) \in \R^m, \\
	&S_{X_p}^* v(\cdot) = \frac{1}{n} \sum_{i = 1}^n K(\cdot, x_i) v_i,\hspace*{0.5cm} v = (v_1,v_2,\ldots,v_n) \in \R^n.
\end{align*}

In the literature, various RKHS-based approaches are available for a Radon-Nikodym derivative estimation. Here we may refer to \cite{kanamori2012} and to references therein. As it can be seen from \cite{que2013}, and also from \cite{gizewski2022}, conceptually, under the assumption that $\beta \in \hk$, several of the above approaches can be derived from a regularization of an operator equation, which can be written in our terms as 
\begin{align}\label{eq2.1}
	J_p^* J_p \beta = J_q^*J_q \mathbf{1}. 
\end{align}
Because of the compactness of the operator $J_p^* J_p$, its inverse $(J_p^* J_p)^{-1}$ cannot be a bounded operator in ${\mathcal{H}_K} $, which makes the equation (\ref{eq2.1}) ill-posed. Here, $\mathbf{1}$ is the constant function that takes the value $1$ everywhere, and almost without loss of generality, we assume that $\mathbf{1} \in {\mathcal{H}_K}$, because otherwise the kernel $K_1 (x, x') = 1 + K(x,x')$ will, for example, be used to generate a suitable RKHS containing all constant functions.

Since there is no direct access to the measures $p$ and $q$, the equation (\ref{eq2.1}) is inaccessible as well, but the samples $X_p$ and $X_q$ allow us to access its empirical version
\begin{align}\label{eq2.2}
	\sxp^* \sxp \beta = \sxq^* \sxq \mathbf{1}.
\end{align}
A regularization of equation (\ref{eq2.2}) may serve as a starting point for several approaches of estimating the Radon - Nikodym derivative $\beta$. For example, as it has been observed in \cite{kanamori2012, gizewski2022}, the known kernel mean matching (KMM) method \cite{huang2006} can be viewed as the regularization of (\ref{eq2.1}) by the method of quasi (least-squares) solutions, originally proposed by Valentin Ivanov (1963) and also known as Ivanov regularization (see, for example, \cite{oneto2016} and \cite{page2019} for its use in the context of learning). At the same time, from Theorem 1 of \cite{kanamori2012} it follows that the kernelized unconstrained least-squares importance fitting (KuLSIF) proposed in \cite{kanamori2012} is in fact the application of the Lavrentiev regularization scheme to the empirical version (\ref{eq2.2}) of the equation (\ref{eq2.1}), that is in KuLSIF we have 
\begin{align} \label{eq:beta_tilde}
	\hat{\beta} = \beta_{\mathbf{X}}^\lambda =  (\lam I + \sxp^* \sxp)^{-1} \sxq^*\sxq \mathbf{1}.
\end{align}

As we have already mentioned in Introduction, early bounds of the accuracy of Radon-Nikodym numerical differentiation have relied only on the capacity of the approximating space. For example, in \cite{nguyen2010, kanamori2012} the capacity of the underlying space $\hk$ has been measured in terms of the so-called bracketing entropy, and in \cite{kanamori2012} the value of the regularization parameter $\lam$ in KuLSIF (\ref{eq2.2}) has been chosen depending on that capacity measure. Note that, in such approach, there is no possibility of incorporating into the regularization the information about other factors, such as the smoothness of the approximated derivative $\beta$, which, as we know from \cite{gizewski2022}, also influences the regularization accuracy.
Therefore, in the present study, we follow \cite{pauwels2018} and employ the concept of the so-called regularized Christoffel function that allows direct incorporation of the regularization parameter $\lam$ into the definition of a capacity measure. Consider the function 
\begin{align}\label{def:Chris_func}
	C_\lam (x) = \innerpro{K(\cdot,x), (\lambda I +J_{p}^* J_{p} )^{-1} K(\cdot,x) }_{\mathcal{H}_K} = \norm{(\lambda I + J_p^* J_p)^{-\frac{1}{2}} K(\cdot,x) }^2_{\mathcal{H}_K}
\end{align}
Note that in \cite{pauwels2018} the reciprocal of $C_\lam (x)$, \ie $\frac{1}{C_\lam (x)}$, was called the regularized Christoffel function, but for the sake of simplicity, we will keep the same name also for (\ref{def:Chris_func}). Note also that in the context of supervised learning where usually only one probability measure, say $p$, is involved, the expected value
\[
\mathcal{N} (\lam) = \int_\bX C_\lam (x) d p(x)
\]
of $C_\lam (x)$, called the effective dimension, has been proven to be useful \cite{caponnetto2007}. This function is used as a capacity measure of $\hk$.

At the same time, if more than one measure appears in the supervised learning context, as is for example the case in the analysis of Nystr{\"o}m subsampling \cite{rudi2015, shuai2019Analysis}, then the $C$-norm of the regularized Christoffel function 
\begin{align}\label{def:N_inf}
	\mathcal{N}_\infty (\lambda) := \sup_{x \in \mathbf{X}} C_\lambda (x) 
\end{align}
is used in parallel with the effective dimension $\mathcal{N} (\lam)$. This gives a hint that $\mathcal{N}_\infty (\lambda)$ could also be a suitable capacity measure for analysing the accuracy of Radon-Nikodym numerical differentiation.



We will need the following statement.

\begin{lemma}\label{lem1}
	Let $b_0 > 0$ be such that $| \beta(x)| \leq b_0$ for any $x \in \bX$. Then with probability at least $1 - \delta$ we have
\begin{align*}
    \norm{(\lambda I + J_p^* J_p)^{-\frac{1}{2}} (\sxp^* \sxp \beta - \sxq^* \sxq \mathbf{1}) }_{\mathcal{H}_K} \leq \left( 1 + \sqrt{2 \log \frac{2}{\delta}}  \right) \sqrt{\mathcal{N}_\infty (\lambda)}  \sqrt{\frac{b_0}{n} + \frac{1}{m}}.
\end{align*}	
\end{lemma}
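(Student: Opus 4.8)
The plan is to recognize the left-hand side as the norm of a \emph{centered} sum of independent $\hk$-valued random vectors and to control it by a concentration inequality in Hilbert space, with the variance estimated through the Christoffel function. First I would unfold the two operators: since $\sxp^*\sxp\beta = \frac{1}{n}\sum_{i=1}^n \beta(x_i) K(\cdot,x_i)$ and $\sxq^*\sxq\mathbf{1} = \frac{1}{m}\sum_{j=1}^m K(\cdot,x_j')$, both are empirical averages of i.i.d. $\hk$-valued random elements. The crucial observation is that they share the \emph{same} mean: writing $\mu = J_p^*J_p\beta = J_q^*J_q\mathbf{1}$, for every $g\in\hk$ one has $\mathbb{E}_{x\sim p}[\beta(x)g(x)] = \int g\,\beta\,dp = \int g\,dq = \mathbb{E}_{x'\sim q}[g(x')]$ by the defining property $dq = \beta\,dp$ of the Radon--Nikodym derivative, so that $\mathbb{E}_p[\beta(\cdot)K(\cdot,\cdot)] = \mathbb{E}_q[K(\cdot,\cdot)] = \mu$. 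Hence, after applying $T := (\lambda I + J_p^*J_p)^{-1/2}$, the vector inside the norm equals $\frac{1}{n}\sum_i(\xi_i - \mathbb{E}\xi_i) - \frac{1}{m}\sum_j(\eta_j-\mathbb{E}\eta_j)$ with $\xi_i = \beta(x_i)\,T K(\cdot,x_i)$ and $\eta_j = T K(\cdot,x_j')$, i.e. a sum of $n+m$ independent, mean-zero summands.

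Next I would estimate the norms and second moments of these summands through the Christoffel function, using $\norm{T K(\cdot,x)}_{\hk}^2 = C_\lambda(x) \le \mathcal{N}_\infty(\lambda)$. This immediately gives the almost-sure bounds $\norm{\xi_i}_{\hk} \le b_0\sqrt{\mathcal{N}_\infty(\lambda)}$ and $\norm{\eta_j}_{\hk}\le\sqrt{\mathcal{N}_\infty(\lambda)}$. For the variances, the key step is that $0\le\beta\le b_0$ yields $\beta^2\le b_0\beta$, so $\mathbb{E}\norm{\xi_i}^2 = \int\beta^2 C_\lambda\,dp \le b_0\int\beta C_\lambda\,dp = b_0\int C_\lambda\,dq \le b_0\,\mathcal{N}_\infty(\lambda)$, where I again use $dq=\beta\,dp$, and similarly $\mathbb{E}\norm{\eta_j}^2 = \int C_\lambda\,dq\le\mathcal{N}_\infty(\lambda)$. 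Because the two samples are independent and the summands centered, the cross terms vanish and the total second moment of the combined sum is $\frac{1}{n}\mathbb{E}\norm{\xi_1}^2 + \frac{1}{m}\mathbb{E}\norm{\eta_1}^2 \le \mathcal{N}_\infty(\lambda)\left(\frac{b_0}{n}+\frac{1}{m}\right)$, which is exactly the quantity under the square root in the claimed bound.

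Finally I would invoke a Bernstein-type concentration inequality for the norm of a sum of independent bounded $\hk$-valued random vectors, applied to the \emph{single} pooled sum of $n+m$ terms so that the variance enters jointly as $\mathcal{N}_\infty(\lambda)(b_0/n+1/m)$. Combined with the Jensen bound $\mathbb{E}\norm{\cdot}\le(\mathbb{E}\norm{\cdot}^2)^{1/2}$ for the deterministic contribution (the ``$1$'') and the sub-Gaussian tail for the deviation (the ``$\sqrt{2\log(2/\delta)}$''), this produces the factor $1+\sqrt{2\log(2/\delta)}$ and the stated estimate.

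I expect the main obstacle to be twofold. First, obtaining the variance with the correct dependence $b_0/n$ (rather than a crude $b_0^2/n$) hinges entirely on the change-of-measure identity $\int\beta C_\lambda\,dp = \int C_\lambda\,dq$ together with $\beta^2\le b_0\beta$; without this trick the a.s. bound alone would spoil the $b_0$-scaling. Second, keeping the two sampling errors under a \emph{single} square root $\sqrt{b_0/n+1/m}$ rather than the looser $\sqrt{b_0/n}+\sqrt{1/m}$ forces one to treat $X_p$ and $X_q$ as one family of $n+m$ independent summands and to apply the concentration inequality to the whole sum at once, instead of bounding the two averages separately and adding.
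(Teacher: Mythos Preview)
Your reduction is the same as the paper's: rewrite $(\lambda I + J_p^*J_p)^{-1/2}(\sxp^*\sxp\beta - \sxq^*\sxq\mathbf{1})$ as $\frac{1}{n}\sum_i\beta(x_i)\phi(x_i) - \frac{1}{m}\sum_j\phi(x_j')$ with $\phi(x) = (\lambda I + J_p^*J_p)^{-1/2}K(\cdot,x)$, note that $\norm{\phi(x)}_\hk = \sqrt{C_\lambda(x)} \le \sqrt{\mathcal{N}_\infty(\lambda)}$, and apply a concentration inequality for Hilbert-valued sums. The paper simply plugs $R = \sqrt{\mathcal{N}_\infty(\lambda)}$ into Huang et al.'s lemma (quoted as Lemma~\ref{lemma4_huang06}, a McDiarmid/bounded-differences result) and stops; no explicit centering or variance computation is carried out.

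Two remarks on where your write-up diverges. First, the paper's own proof actually ends with $\sqrt{b_0^2/n + 1/m}$, not $\sqrt{b_0/n + 1/m}$: the $b_0$ in the lemma's display is a typo, and the later uses of the lemma in the paper all carry the $b_0^2$ version. So your change-of-measure trick $\int\beta^2 C_\lambda\,dp \le b_0\int C_\lambda\,dq$ is a genuine sharpening of the \emph{expectation} term, not something the paper does. Second, that sharpening does not propagate to the deviation term in the form you claim. The prefactor $1+\sqrt{2\log(2/\delta)}$ in Huang's lemma comes from McDiarmid's inequality applied to the scalar map $(x_i,x_j')\mapsto\norm{\cdot}_\hk$, whose bounded differences along the $x_i$-coordinates are $2b_0\sqrt{\mathcal{N}_\infty(\lambda)}/n$, yielding sub-Gaussian proxy $b_0^2\mathcal{N}_\infty(\lambda)/n$, not $b_0\mathcal{N}_\infty(\lambda)/n$. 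A Bernstein-type inequality would let the true variance enter but adds a term linear in $\log(1/\delta)$ and destroys the clean constant. So either accept $b_0^2/n$ throughout (as the paper effectively does) or accept a slightly different shape for the bound; the route you describe does not deliver exactly the displayed estimate with $b_0/n$.
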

The proof of Lemma \ref{lem1} is based on Lemma 4 of \cite{huang2006}, which we formulate in our notations as follows

\begin{lemma} \label{lemma4_huang06} (\cite{huang2006})
	Let $\phi$ be a map from $\mathbf{X}$ into $\mathcal{H}_K$ such that $\norm{\phi(x)}_{\mathcal{H}_K} \leq R$ for all $x \in \mathbf{X}$. Then with probability at least $1 - \delta$ it holds 
	$$\norm{ \frac{1}{m} \sum_{j = 1}^m \phi (x_j') -  \frac{1}{n} \sum_{i = 1}^n \beta (x_i) \phi (x_i)  }_{\mathcal{H}_K} \leq \left( 1 + \sqrt{2 \log \frac{2}{\delta}} \right) R \sqrt{ \frac{b_0^2}{n} + \frac{1}{m}}.$$
\end{lemma}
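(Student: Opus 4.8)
The plan is to exploit the fact that, by the very definition of the Radon--Nikodym derivative $\beta = dq/dp$, the two empirical averages in the statement are unbiased estimators of one and the same element of $\hk$. Setting $\mu := \int_\bX \phi(x)\,dq(x) = \int_\bX \beta(x)\phi(x)\,dp(x)\in\hk$ (Bochner integrals), the equality of these two elements follows from the defining property of $\beta$ applied to each scalar function $x\mapsto\innerpro{\phi(x),h}_{\hk}$, $h\in\hk$. Writing $\Psi_q := \frac{1}{m}\sum_{j=1}^m \phi(x_j')$ and $\Psi_p := \frac{1}{n}\sum_{i=1}^n \beta(x_i)\phi(x_i)$, we then have $\E[\Psi_q]=\E[\Psi_p]=\mu$, so the quantity to control is $G := \norm{\Psi_q - \Psi_p}_{\hk}$, regarded as a function of the $m+n$ independent samples.

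First I would bound the expectation $\E[G]$. By Jensen's inequality $\E[G]\le(\E\,\norm{\Psi_q-\Psi_p}_{\hk}^2)^{1/2}$, and since the $q$-sample and the $p$-sample are independent and both averages are centered at $\mu$, the cross term vanishes, giving $\E\,\norm{\Psi_q-\Psi_p}^2=\E\,\norm{\Psi_q-\mu}^2+\E\,\norm{\Psi_p-\mu}^2$. Each summand is the variance of an average of i.i.d. $\hk$-valued elements, hence bounded by $m^{-1}\E_q\norm{\phi}^2\le R^2/m$ and $n^{-1}\E_p[\beta^2\norm{\phi}^2]\le b_0^2 R^2/n$ respectively, using $\norm{\phi(x)}\le R$ and $|\beta(x)|\le b_0$. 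This yields $\E[G]\le R\sqrt{b_0^2/n + 1/m}$, which is exactly the leading constant in the target bound.

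Next I would establish concentration of $G$ about its mean via McDiarmid's bounded-difference inequality. Replacing a single $q$-sample $x_j'$ changes $\Psi_q$ by at most $\frac{1}{m}\norm{\phi(x_j')-\phi(\tilde x_j')}_{\hk}\le 2R/m$, and by the reverse triangle inequality changes $G$ by at most $2R/m$; likewise replacing a single $p$-sample $x_i$ changes $G$ by at most $\frac{1}{n}\cdot 2b_0R = 2b_0R/n$. The sum of squared bounded differences is therefore $m(2R/m)^2+n(2b_0R/n)^2 = 4R^2(b_0^2/n+1/m)$. The two-sided McDiarmid inequality then gives $\P(|G-\E G|\ge t)\le 2\exp(-t^2/(2R^2(b_0^2/n+1/m)))$; solving $2\exp(\cdot)=\delta$ for $t$ produces the deviation $t = R\sqrt{b_0^2/n+1/m}\,\sqrt{2\log(2/\delta)}$, where the $2$ inside the logarithm is inherited from the two-sided prefactor.

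Combining the two steps, with probability at least $1-\delta$ we obtain $G\le \E[G]+t\le R\sqrt{b_0^2/n+1/m}\,(1+\sqrt{2\log(2/\delta)})$, which is the claim. There is no genuinely hard step here: this is a standard vector-valued concentration argument, and the only conceptual input is the mean-matching in the first step, where one must verify that the common population mean $\mu$ is a legitimate element of $\hk$ (so that the centered second-moment splitting is valid) and that the Radon--Nikodym identity really equates the two population means once tested against elements of $\hk$. The one technical care point is the reverse-triangle-inequality justification that the norm $G$ inherits the bounded differences of its argument $\Psi_q-\Psi_p$.
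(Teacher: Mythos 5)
Your proof is correct: the mean-matching via the Radon--Nikodym identity, the variance bound $\E[G]\le R\sqrt{b_0^2/n+1/m}$, the bounded differences $2R/m$ and $2b_0R/n$ giving $\sum c_i^2 = 4R^2(b_0^2/n+1/m)$, and the two-sided McDiarmid inequality all check out and yield exactly the stated constant $1+\sqrt{2\log(2/\delta)}$. The paper itself offers no proof of this lemma---it is quoted verbatim from \cite{huang2006}---and your expectation-plus-bounded-differences argument is essentially the standard proof given in that reference, so you have faithfully reconstructed the cited argument rather than taken a different route.
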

Now we can return to Lemma \ref{lem1}.
\begin{proof} 
	We define a map $\phi : \mathbf{X} \rightarrow \mathcal{H}_K$ as $ \phi (x) = (\lambda I + J_p^* J_p)^{-\frac{1}{2}} K(\cdot,x) $, $x \in \mathbf{X}$. It is clear that 
	$$\norm{\phi (x)}_{\mathcal{H}_K} = \norm{(\lambda I + J_p^* J_p)^{-\frac{1}{2}} K(\cdot,x) }_{\mathcal{H}_K} = \sqrt{C_\lam (x)} \leq \sqrt{\mathcal{N}_\infty (\lambda)}.$$
	Therefore, for the map $\phi $ the condition of the above Lemma \ref{lemma4_huang06} is satisfied with $R = \mathcal{N}_\infty (\lambda)$. Then directly from that lemma, we have 
	\begin{align*}
		\norm{(\lambda I + J_p^* J_p)^{-\frac{1}{2}} (\sxp^* \sxp \beta - \sxq^* \sxq \mathbf{1}) }_{\mathcal{H}_K}  &= \norm{ \frac{1}{n} \sum_{i = 1}^n \beta(x_i) \phi (x_i) - \frac{1}{m} \sum_{j = 1}^m \phi (x_j') }_{\mathcal{H}_K} \\
		& \leq \left( 1 + \sqrt{2 \log \frac{2}{\delta}} \right) \left( \sqrt{\frac{b_0^2}{n} + \frac{1}{m}} \right) \sqrt{\mathcal{N}_\infty (\lambda)}.
	\end{align*}
\end{proof}

\section{General regularization scheme and general source conditions}

All of the available regularization methods have the potential to be employed for the regularization of equation (\ref{eq2.2}). In particular, we will use a general regularization scheme to construct a family of approximate solutions $\beta_{\mathbf{X}}^\lam$ of (\ref{eq2.1}) as follows
\begin{align}\label{eq:beta_g_lam}
	\beta_{\mathbf{X}}^\lam =  g_\lam (\sxp^* \sxp) \sxq^*\sxq \mathbf{1},
\end{align}
where $\{g_\lam\}$ is a family of operator functions parametrized by a regularization parameter $\lam >0$.
\subsection{General regularization scheme}\label{sec:gen_reg_scheme}
Recall (see, for example, Definition 2.2 in \cite{1st_book}) that regularization schemes can be indexed by parametrized functions $g_\lambda : [0,c] \rightarrow \R$,  $\lambda >0.$ The only requirements are that there are positive constants $\gamma_0, \gamma_{- \frac{1}{2}}, \gamma_{-1}$ for which 
\begin{align}\label{eq:gen_reg3.1.1}
	\begin{split}
		\sup_{0 < t \leq c} |1 - t g_\lambda (t)| &\leq \gamma_0, \\
		\sup_{0 < t \leq c} \sqrt{t} |g_\lambda (t)|&\leq \frac{\gamma_{- \frac{1}{2}}}{ \sqrt{\lambda}}, \\
		\sup_{0 < t \leq c}  |g_\lambda (t)| &< \frac{\gamma_{-1}}{\lambda}.
	\end{split}
\end{align} 
Here and in the sequel, we adopt the convention that $c$ denotes a generic positive coefficient, which can vary from appearance to appearance and may only depend on basic parameters such as $p$, $q$, $\kappa_0$, $b_0$, and others introduced below.

The qualification of a regularization scheme indexed by $g_\lambda$ is the maximal $s > 0 $ for which 
\begin{align}\label{eq:gen_reg3.1.2}
	\sup_{0 < t \leq c} t^s | 1 - t g_\lambda (t)  | \leq \gamma_s \lambda^s, 
\end{align}
where $\gamma_s$ does not depend on $\lambda$. Following Definition 2.3 of \cite{1st_book} we also say that the qualification $s$ covers a non-decreasing function $\varphi : [0, c ] \rightarrow \R$, $\varphi (0) = 0,$ if the function $t \rightarrow \frac{t^s}{\varphi (t)}$ is non-decreasing for $t \in (0,c].$ Note that the higher qualification is the more rapidly increasing functions can be covered, and in this way, as it can be seen below, the more smoothness of approximated solutions can be utilized in the regularization.

Observe that the Lavrentiev regularization used in KuLSIF (\ref{eq:beta_tilde}) is indexed by $g_\lam (t) = (\lam + t)^{-1}$ and has qualification $s = 1$. The qualification of this regularization scheme can be increased if one employs the so-called iteration idea, according to which regularized algorithms need to be repeated such that, for example, the approximate Radon-Nikodym derivative $\beta_\mathbf{X}^\lambda = \beta_{\mathbf{X}}^{\lambda,l}$ obtained in the previous $l$-th step plays the role of an initial guess for the next approximation $\beta_\mathbf{X}^\lambda = \beta_{\mathbf{X}}^{\lambda,l+1}$. In this regularization the approximation (\ref{eq:beta_g_lam}) can be obtain iteratively as follows
\begin{align*}
	\beta_{\mathbf{X}}^{\lam,0} &=0,\\
	\beta_{\mathbf{X}}^{\lam,l} &= (\lam I + \sxp^* \sxp)^{-1} (\sxq^*\sxq \mathbf{1} + \lam \beta_{\mathbf{X}}^{\lam,l-1}), \hspace*{0.3cm}l \in \N.
\end{align*}
After $k$ such iterations we obtained the approximation $\beta_\mathbf{X}^\lambda = \beta_{\mathbf{X}}^{\lambda,k}$ that can be represented in the form (\ref{eq:beta_g_lam}) with 
$$g_\lambda (t) = g_{\lambda,k} (t) = \frac{1 - \frac{\lambda^k}{(\lambda + t)^k}}{t}.$$
The regularization indexed by $g_{\lambda,k} (t)$ has the qualification $k$ that can be taken as large as desired. Moreover, for $g_\lambda (t) = g_{\lambda,k} (t)$ the requirements (\ref{eq:gen_reg3.1.1}), (\ref{eq:gen_reg3.1.2}) are satisfied with $\gamma_0 = 1, \gamma_{- \frac{1}{2}} = k^{\frac{1}{2}}, \gamma_{-1} = k, \gamma_k = 1.$

For the sake of shortness, we introduce the residual function
$$r_\lam (t) := 1 - t g_\lam(t), $$
for which (\ref{eq:gen_reg3.1.1}), (\ref{eq:gen_reg3.1.2}) give the bounds $r_\lam (t) \leq \gamma_0$ and $ |t^s r_\lam (t) | \leq \gamma_s \lambda^s.$

\subsection{General source conditions}

As mentioned in the previous section, the equation (\ref{eq2.1}) is inaccessible, but the result \cite{mathe2008} of the regularization theory tells us that there is always a continuous, strictly increasing function $ \varphi : [0,\|J_p^* J_p \|_{\mathcal{H}_K} ] \rightarrow \R$ that obeys $\varphi (0) = 0 $ and allows the representation of the solution of (\ref{eq2.1}) in terms of the so-called source condition:
\begin{align}\label{eq:smooth3.2.1}
	\beta = \varphi (J_p^* J_p ) \nu_q, \hspace*{0.3cm} \nu_q \in {\mathcal{H}_K}.
\end{align}
The function $\varphi$ above is usually called the index function. Moreover, for every $\epsilon > 0$ one can find such $\varphi$ that (\ref{eq:smooth3.2.1}) holds true for $\nu_q$ with $$\|\nu_q\|_{\mathcal{H}_K} \leq (1 + \epsilon) \|\beta\|_{\mathcal{H}_K}. $$
Note that since the operator $J_p^* J_p$ is not accessible, there is a reason to restrict ourselves to consideration of such index functions $\varphi$, which allow us to control perturbations in the operators involved in the definition of source conditions. A class of such index functions has been discussed in \cite{mathe2003, bauer2007}, and here we follow those studies. Namely, we consider the class $\mathcal{F} = \mathcal{F} (0,c)$ of index functions $\varphi : [0,c] \rightarrow \R_+$ allowing splitting $\varphi (t) = \vartheta (t) \psi (t)$ into monotone Lipschitz part $\vartheta, \vartheta (t) = 0,$ with the Lipschitz constant equal to 1, and an operator monotone part $\psi, \psi(0)= 0.$ 

Recall that a function $\psi$ is operator monotone on $[0,c]$ if for any pair of self-adjoint operators $U, V$ with spectra in $[0,c]$ such that $U \leq V$ (\ie $V - U$ is an non-negative operator) we have $\psi (U) \leq \psi (V)$.

Examples of operator monotone index functions are $\psi (t) = t^\nu,$ $\psi (t) = \log^{-\nu} \left( \frac{1}{t} \right),$ $\psi (t) = \log^{-\nu} \left( \log \frac{1}{t} \right), 0 < \nu  \leq 1,$ while an example of a function $\varphi$ from the above defined class $\mathcal{F}$ is $\varphi(t) = t^r \log^{-\nu} \left( \frac{1}{t} \right), r > 1, 0 < \nu \leq 1,$  since it can be splitted in a Lipschitz part $\vartheta (t) = t^r$ and an operator monotone part $\psi (t) = \log^{-\nu} \left( \frac{1}{t} \right).$

We will need the result of Proposition 3.1 in \cite{2nd_book}, which we formulate in
our notations as follows
\begin{lemma}\label{lem:prop3.1_sergei_book} (\cite{2nd_book})
	Let $\varphi : [0,c] \rightarrow \R$, $\varphi (0)=0,$ be any non-decreasing index function. If the qualification $s$ of the regularization indexed by a family $\{	g_\lambda   \}$ covers the function $\varphi$, then for any $ \lambda \in  (0,c] $ it holds 
	\[
	\sup_{t \in [0,c]} | r_\lam (t) \varphi (t)| \leq \gamma_{0,s} \varphi(\lambda),
	\]
	where $ \gamma_{0,s} = \max \{\gamma_0, \gamma_s \} $, and $ \gamma_0, \gamma_s $ are the coefficients appearing in (\ref{eq:gen_reg3.1.1}) and in (\ref{eq:gen_reg3.1.2}).  
\end{lemma}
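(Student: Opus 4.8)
The plan is to bound the quantity $|r_\lambda(t)\varphi(t)|$ pointwise in $t$ and then take the supremum, splitting the interval at the regularization parameter itself. For fixed $\lambda \in (0,c]$ I would distinguish the two regimes $0 < t \leq \lambda$ and $\lambda < t \leq c$, while the boundary point $t=0$ is handled trivially since $\varphi(0)=0$ forces $r_\lambda(0)\varphi(0)=0$. The guiding idea is that in each regime exactly one of the two uniform residual estimates is effective: the plain bound $|r_\lambda(t)| \leq \gamma_0$ coming from (\ref{eq:gen_reg3.1.1}), and the qualification bound $t^s|r_\lambda(t)| \leq \gamma_s\lambda^s$ coming from (\ref{eq:gen_reg3.1.2}). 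In each case I pair the chosen residual estimate with an appropriate monotonicity property to produce a bound of the form $\text{const}\cdot\varphi(\lambda)$.

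For the small-$t$ regime $0 < t \leq \lambda$, I would use only monotonicity of $\varphi$. Since $\varphi$ is non-decreasing and $t \leq \lambda$, we have $\varphi(t) \leq \varphi(\lambda)$, so
\[
|r_\lambda(t)\varphi(t)| \leq \gamma_0\,\varphi(t) \leq \gamma_0\,\varphi(\lambda).
\]
For the large-$t$ regime $\lambda < t \leq c$, the decisive hypothesis is the covering condition, which states precisely that $t \mapsto t^s/\varphi(t)$ is non-decreasing on $(0,c]$. Evaluating this at $\lambda < t$ gives $\lambda^s/\varphi(\lambda) \leq t^s/\varphi(t)$, hence $\varphi(t) \leq (t^s/\lambda^s)\,\varphi(\lambda)$. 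Substituting this and then invoking the qualification estimate yields
\[
|r_\lambda(t)\varphi(t)| \leq \frac{\varphi(\lambda)}{\lambda^s}\, t^s\,|r_\lambda(t)| \leq \frac{\varphi(\lambda)}{\lambda^s}\,\gamma_s\lambda^s = \gamma_s\,\varphi(\lambda).
\]
Taking the supremum over all $t \in [0,c]$ and bounding both cases by $\max\{\gamma_0,\gamma_s\}=\gamma_{0,s}$ then gives the claimed inequality.

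The argument is short and involves no delicate analysis; the main thing to get right is conceptual rather than computational. The covering condition is exactly the device that converts the growth of $\varphi$ on the large-$t$ range into the factor $t^s/\lambda^s$, which is what is needed to cancel against the qualification bound $t^s|r_\lambda(t)| \leq \gamma_s\lambda^s$. Recognizing that the two regimes must be matched to the two different residual estimates — and that the split point should be $t=\lambda$ — is the key structural insight. I would also note in passing that the hypothesis $\lambda \in (0,c]$ guarantees that $\varphi(\lambda)$ and the monotonicity comparisons at $\lambda$ are meaningful, so the split point is admissible.
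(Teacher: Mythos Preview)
Your proof is correct and is exactly the standard argument for this classical regularization-theory lemma: split at $t=\lambda$, use $|r_\lambda(t)|\leq\gamma_0$ together with monotonicity of $\varphi$ on the small-$t$ side, and use the covering condition $t\mapsto t^s/\varphi(t)$ non-decreasing together with the qualification bound $t^s|r_\lambda(t)|\leq\gamma_s\lambda^s$ on the large-$t$ side. Note, however, that the paper does not actually supply its own proof of this lemma; it is stated as a quotation of Proposition~3.1 from \cite{2nd_book} and used without proof, so there is nothing in the paper to compare your argument against beyond confirming that your reasoning matches the intended classical proof.
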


To estimate the regularized Christoffel functions we slightly generalize a source condition for kernel sections $K(\cdot,x)$ that has been used in various contexts in \cite{shuai2019Analysis} and \cite{devito2014}.

\begin{assumption} \label{assum:source_cond_kernel}
	(Source condition for kernel) There is an operator concave index function $\xi : [0, \norm{J^*_p J_p}] \to [0,\infty)$ and $\xi^2$ is covered by qualification $s=1$ such that, for all $x \in \bX $,
	$$K(\cdot, x) = \xi (J^*_p J_p) v_x, \hspace*{0.3cm} \norm{v_x}_\hk \leq c, 
	$$
	where $c$ does not depend on $x$.	
\end{assumption}
We mention the following consequence of Assumption \ref{assum:source_cond_kernel}.
\begin{lemma}\label{lem:N_infty_bound}
	Under Assumption \ref{assum:source_cond_kernel},
	$$ \mathcal{N}_\infty (\lam) \leq c \frac{\xi^2(\lam)}{\lam}. $$
\end{lemma}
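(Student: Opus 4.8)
The plan is to reduce the operator-theoretic definition of $\mathcal{N}_\infty(\lam)$ to a single scalar supremum over the spectrum of $J_p^* J_p$ via the spectral calculus, and then control that supremum using the qualification hypothesis on $\xi^2$. First I would insert the kernel source condition $K(\cdot,x) = \xi(J_p^* J_p) v_x$ from Assumption~\ref{assum:source_cond_kernel} into the second expression for the Christoffel function in (\ref{def:Chris_func}):
\[
C_\lam(x) = \norm{(\lambda I + J_p^* J_p)^{-\frac12}\xi(J_p^* J_p) v_x}_{\hk}^2 \leq \norm{(\lambda I + J_p^* J_p)^{-\frac12}\xi(J_p^* J_p)}^2 \norm{v_x}_{\hk}^2.
\]
Since $\norm{v_x}_{\hk} \leq c$ uniformly in $x$, taking the supremum over $x \in \bX$ yields $\mathcal{N}_\infty(\lam) \leq c\,\norm{(\lambda I + J_p^* J_p)^{-1/2}\xi(J_p^* J_p)}^2$; thus the kernel source condition has served its purpose of eliminating the $x$-dependence and leaving a single operator to bound.

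The operator $J_p^* J_p$ is self-adjoint, non-negative and bounded with spectrum in $[0,\norm{J_p^* J_p}]$, so the two functions $(\lambda + \cdot)^{-1/2}$ and $\xi$ of it commute, and by the spectral theorem the remaining norm equals $\sup_{0 \le t \le \norm{J_p^* J_p}} \xi^2(t)/(\lambda + t)$. It then suffices to prove the purely scalar inequality
\[
\sup_{0 \le t \le \norm{J_p^* J_p}} \frac{\xi^2(t)}{\lambda + t} \;\leq\; \frac{\xi^2(\lambda)}{\lambda},
\]
which I would establish by splitting the range of $t$ at $\lambda$. For $t \le \lambda$, the fact that $\xi$ is an index function (hence non-decreasing) gives $\xi^2(t) \le \xi^2(\lambda)$, while $\lambda + t \ge \lambda$, so $\xi^2(t)/(\lambda+t) \le \xi^2(\lambda)/\lambda$. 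For $t \ge \lambda$, I invoke the qualification assumption: "$\xi^2$ is covered by qualification $s=1$" means precisely that $t \mapsto t/\xi^2(t)$ is non-decreasing, equivalently $t \mapsto \xi^2(t)/t$ is non-increasing; hence $\xi^2(t)/(\lambda+t) \le \xi^2(t)/t \le \xi^2(\lambda)/\lambda$. Combining the two cases with the operator-norm reduction gives $\mathcal{N}_\infty(\lam) \le c\,\xi^2(\lambda)/\lambda$ after absorbing the factor $\norm{v_x}^2$ into the generic constant $c$.

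The only conceptually delicate point — the one I would state explicitly — is the translation of the covering condition into the monotonicity of $t \mapsto \xi^2(t)/t$, since that is exactly what makes the decisive case $t \ge \lambda$ work; once this is recorded, the scalar two-case estimate is elementary and the operator concavity of $\xi$ is needed only to guarantee that $\xi(J_p^*J_p)$ is a well-behaved function of the operator. I would also note the implicit requirement $\lambda \le \norm{J_p^* J_p}$, so that $\lambda$ lies in the domain of $\xi$ and $\xi(\lambda)$ on the right-hand side is defined; this is the regime of interest for regularization, where $\lambda \to 0$.
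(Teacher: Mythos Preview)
Your argument is correct and tracks the paper's proof almost exactly up to the point where both reduce the problem to bounding the scalar quantity $\sup_{t}\xi^{2}(t)/(\lambda+t)$. The only difference is in how that supremum is handled: the paper rewrites $(\lambda+t)^{-1}=\lambda^{-1}\bigl(1-t(\lambda+t)^{-1}\bigr)=\lambda^{-1}r_{\lambda}(t)$ for the Lavrentiev residual and then invokes Lemma~\ref{lem:prop3.1_sergei_book} (which, for qualification $s=1$ covering $\xi^{2}$, gives $\sup_{t}|r_{\lambda}(t)\xi^{2}(t)|\le\gamma_{0,1}\xi^{2}(\lambda)$), whereas you establish the same bound by the elementary case split at $t=\lambda$. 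Your two-case argument is in fact precisely the proof of Lemma~\ref{lem:prop3.1_sergei_book} specialized to $g_{\lambda}(t)=(\lambda+t)^{-1}$, so the two routes are the same idea at different levels of abstraction; yours is more self-contained, the paper's makes the role of the qualification machinery explicit and reusable.
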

\begin{proof}
	This simply follows from
	\begin{align*}
		\mathcal{N}_\infty (\lambda) &= \sup_{x \in \mathbf{X}} \norm{(\lambda I + J_p^* J_p)^{-\frac{1}{2}} K(\cdot,x) }^2_{\mathcal{H}_K} \\
		&= \sup_{x \in \mathbf{X}} \norm{(\lambda I + J_p^* J_p)^{-\frac{1}{2}} \xi (J^*_p J_p) v_x }^2_{\mathcal{H}_K} \\
		&\leq \sup_{x \in \mathbf{X}} \norm{v_x}^2_\hk \sup_{t \in \left[ 0,||J^*_pJ_p|| \right] }   | (\lam + t)^{-\frac{1}{2}} \xi (t) |^2 \\		
		&\leq c \sup_t  | (\lam + t)^{-1} \xi^2 (t) | \\
		&\leq c \lambda^{-1} \sup_t  \bigg| \left(1 - t(\lam + t)^{-1} \right) \xi^2 (t) \bigg| \\
		& \leq c \lambda^{-1} \sup_t  | r_\lam (t) \xi^2 (t) |\\
		&\leq c \frac{\xi^2(\lam)}{\lam},
	\end{align*}
	where in the last inequality we have used Lemma \ref{lem:prop3.1_sergei_book}, Assumption \ref{assum:source_cond_kernel}, and the fact that the Lavrentiev regularization indexed by $g_\lam (t) = (\lam +t)^{-1}$ has the qualification $s = 1$.
\end{proof}
\begin{remark}
	In \cite{pauwels2018}, the asymptotic behavior of the regularized Christoffel functions $C_\lam (x)$ as $\lam \to 0$ has been analyzed for translation invariant kernels $K(x,t) = K(x-t)$. Our Lemma \ref{lem:N_infty_bound} can be viewed as an extension of that analysis based on the general source conditions on the kernel sections $K_x(\cdot) = K(\cdot,x) \in \hk$.
\end{remark}
\section{Error estimates in RKHS}
In this section, we discuss error estimates between $\beta$ and $\beta_{\mathbf{X}}^\lam$ for RKHS norm. To this end, we consider an auxiliary regularized approximation $\bar{\beta}^\lam$ defined as follows
\begin{align}\label{eq:beta_bar}
	\bar{\beta}^\lam = g_\lam (\sxp^*\sxp) \sxp^*\sxp \beta.
\end{align} 
Then we decompose the error bound into two parts:
\begin{align}\label{eq:decom_bound}
	\norm{\beta - \beta^\lam_{\mathbf{X}}}_\hk \leq \norm{\beta - \bar{\beta}^\lambda }_\hk + \norm{\bar{\beta}^\lambda - \beta^\lam_{\mathbf{X}}}_\hk.
\end{align}
We call the first term on the right-hand side of (\ref{eq:decom_bound}) the approximation error, and the second term the noise propagation error.

Following \cite{lu2020}, we introduce the functions
\begin{align}
	\mathcal{B}_{n,\lam} &:= \frac{2 \kappa_0}{\sqrt{n}} \left( \frac{\kappa_0}{\sqrt{n \lam}} + \sqrt{\mathcal{N}(\lam)}\right),\label{def:bn} \\
	\Upsilon (\lam) &:= \left( \frac{\mathcal{B}_{n,\lam}}{\sqrt{\lam}}   \right)^2 + 1, \label{def:Upsilon_lam}
\end{align}
which will be useful in the subsequent analysis.

 Moreover, we need the following estimates from \cite{lu2020} that are valid with probability at least $1 - \delta$ and can be written in our notations as
\begin{align}
	\norm{J^*_p J_p - \sxp^* \sxp}_{\hk \to \hk} &\leq \frac{4 \kappa_0^2}{\sqrt{n}} \log \frac{2}{\delta}, \label{1st_bound_of_3}\\
	\norm{(\lam I + J^*_p J_p )^{-1/2} (J^*_p J_p - \sxp^* \sxp)}_{\hk \to \hk} &\leq \mathcal{B}_{n,\lam} \log \frac{2}{\delta},\label{2nd_bound_of_3} \\
	\Xi := \norm{(\lam I + J_p^* J_p) (\lam I + \sxp^* \sxp)^{-1}}_{\hk \to \hk}  & \leq 2 \left[ \left( \frac{\mathcal{B}_{n,\lam} \log \frac{2}{\delta}}{\sqrt{\lam}}   \right)^2 + 1  \right]. \label{def:Xi}
\end{align}


\begin{proposition}\label{prop:appro_bound}
	(Approximation error bound).

	\begin{enumerate}
		\item If $\beta$ meets source condition (\ref{eq:smooth3.2.1}), where  $\varphi$ is an operator monotone index function, then
		$$ \norm{\beta - \bar{\beta}^\lam}_\hk \leq c (\gamma_0 + \gamma_{-1}) \Xi \varphi (\lam), \hspace*{0.3cm} 0 < \lam \leq \kappa_0.  $$
		\item If $\beta$ meets source condition (\ref{eq:smooth3.2.1}), where $\varphi = \vartheta \psi \in \mathcal{F} (0,c)$ with $c$ is large enough and if the qualification of the regularization $g_\lam$ covers $\vartheta (t) t^\frac32$, then
		$$ \norm{\beta - \bar{\beta}^\lam}_\hk \leq c \Xi \varphi (\lam) + \psi(\kappa_0) \gamma_0 \norm{J_p^* J_p - \sxp^* \sxp}_{\hk \to \hk}. $$
	\end{enumerate}
\end{proposition}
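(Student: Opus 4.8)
The plan is to start, in both parts, from the single identity that isolates the residual. Writing the true and empirical operators as $J_p^* J_p$ and $\sxp^* \sxp$, the definition (\ref{eq:beta_bar}) of $\bar{\beta}^\lam$ together with the source condition (\ref{eq:smooth3.2.1}) gives
$$\beta - \bar{\beta}^\lam = \left( I - g_\lam(\sxp^* \sxp)\,\sxp^* \sxp \right)\beta = r_\lam(\sxp^* \sxp)\,\beta = r_\lam(\sxp^* \sxp)\,\varphi(J_p^* J_p)\,\nu_q,$$
where $\norm{\nu_q}_\hk \le c$. The whole difficulty is that $r_\lam$ is evaluated at the empirical operator $\sxp^* \sxp$ while $\varphi$ is evaluated at the inaccessible $J_p^* J_p$, so the two factors do not commute; the quantity $\Xi$ from (\ref{def:Xi}) is precisely the device for transferring between the regularised resolvents $(\lam I + \sxp^* \sxp)^{-1}$ and $(\lam I + J_p^* J_p)^{-1}$.

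For Part 1 I would exploit that an operator monotone index function is concave with $\varphi(0)=0$, hence $\varphi(t)/(\lam+t)\le\varphi(\lam)/\lam$ on the spectrum, which by the functional calculus yields $\norm{(\lam I + J_p^* J_p)^{-1}\varphi(J_p^* J_p)}_{\hk\to\hk}\le\varphi(\lam)/\lam$. Then I factor
$$r_\lam(\sxp^* \sxp)\,\varphi(J_p^* J_p) = \Big[ r_\lam(\sxp^* \sxp)(\lam I + \sxp^* \sxp) \Big]\Big[ (\lam I + \sxp^* \sxp)^{-1}(\lam I + J_p^* J_p) \Big]\Big[ (\lam I + J_p^* J_p)^{-1}\varphi(J_p^* J_p) \Big].$$
The middle factor has norm $\Xi$; the last factor is bounded by $\varphi(\lam)/\lam$; and the first factor equals $\sup_t | r_\lam(t)(\lam+t) |$, which is of order $\lam$ by the elementary estimates (\ref{eq:gen_reg3.1.1})--(\ref{eq:gen_reg3.1.2}) on the residual, contributing the constant $\gamma_0+\gamma_{-1}$. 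Multiplying the three bounds gives $c(\gamma_0+\gamma_{-1})\,\Xi\,\varphi(\lam)$, as claimed.

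For Part 2 the obstruction is that a general $\varphi=\vartheta\psi\in\mathcal F(0,c)$ need not be concave (the Lipschitz factor $\vartheta$ may grow faster than linearly), so the shortcut of Part 1 is unavailable and the smoothness must be extracted through the qualification. I would split the Lipschitz factor around the empirical operator,
$$r_\lam(\sxp^* \sxp)\,\vartheta(J_p^* J_p)\psi(J_p^* J_p) = r_\lam(\sxp^* \sxp)\,\vartheta(\sxp^* \sxp)\psi(J_p^* J_p) + r_\lam(\sxp^* \sxp)\big( \vartheta(J_p^* J_p)-\vartheta(\sxp^* \sxp) \big)\psi(J_p^* J_p),$$
and treat the summands differently. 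In the first summand the operator monotone factor $\psi$ is handled exactly as in Part 1 (concavity gives $(\lam I + J_p^* J_p)^{-1}\psi(J_p^* J_p)\le\psi(\lam)/\lam$, and $\Xi$ transfers the resolvents), while the residual acting on the empirical Lipschitz factor is controlled through Lemma \ref{lem:prop3.1_sergei_book}; this is where the hypothesis that the qualification covers $\vartheta(t)t^{\frac{3}{2}}$ is consumed, since the resolvent powers introduced by the factorisation raise the effective index function, and the outcome is the main term $c\,\Xi\,\varphi(\lam)$. In the second summand I would use $\norm{r_\lam(\sxp^* \sxp)}_{\hk\to\hk}\le\gamma_0$, the spectral boundedness $\norm{\psi(J_p^* J_p)}_{\hk\to\hk}\le\psi(\kappa_0)$, and $\norm{\nu_q}_\hk\le c$, reducing everything to $\norm{\vartheta(J_p^* J_p)-\vartheta(\sxp^* \sxp)}_{\hk\to\hk}$.

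The main obstacle is exactly this last quantity: although $\vartheta$ is scalar Lipschitz with constant $1$, the step $\norm{\vartheta(J_p^* J_p)-\vartheta(\sxp^* \sxp)}_{\hk\to\hk}\le\norm{J_p^* J_p-\sxp^* \sxp}_{\hk\to\hk}$ is an operator-Lipschitz statement, which is not automatic from scalar Lipschitz continuity and must be justified from the monotone–Lipschitz structure of the class $\mathcal F$; this produces the additive term $\psi(\kappa_0)\gamma_0\norm{J_p^* J_p-\sxp^* \sxp}_{\hk\to\hk}$. A secondary bookkeeping point is to check that the half-integer resolvent powers entering the factorisation are matched by the $t^{\frac{3}{2}}$ qualification, so that the leading term stays of the optimal order $\varphi(\lam)$ rather than degrading.
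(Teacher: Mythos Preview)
Your approach is essentially the paper's: the paper gives no argument of its own here, deferring entirely to Proposition~4.3 of \cite{lu2020}, and your resolvent factorisation through $\Xi$ together with the concavity bound $(\lam I+J_p^*J_p)^{-1}\varphi(J_p^*J_p)\preceq\varphi(\lam)/\lam$ for Part~1, and the splitting $\vartheta(J_p^*J_p)=\vartheta(\sxp^*\sxp)+(\vartheta(J_p^*J_p)-\vartheta(\sxp^*\sxp))$ for Part~2, is exactly the template one sees written out in the paper's own proof of the companion Proposition~\ref{prop:pw_appro_bound} (explicitly described there as a modification of the \cite{lu2020} argument). The only slip is the bookkeeping on the first factor in Part~1: the estimate $\sup_t|r_\lam(t)(\lam+t)|\le c\lam$ does not follow from (\ref{eq:gen_reg3.1.1}) alone but needs qualification $s\ge 1$ via (\ref{eq:gen_reg3.1.2}), so the natural constant is $\gamma_0+\gamma_1$ rather than $\gamma_0+\gamma_{-1}$; this does not affect the scheme.
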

\begin{proof}
	The Proposition \ref{prop:appro_bound} can be proved by repeating line by line the argument of the proof of Proposition 4.3 in \cite{lu2020}, where the items denoted there as $T$, $T_x$, $f^\dagger$ and $\bar{f}_x^\lam$ should be substituted by $J_p^* J_p$, $\sxp^* \sxp$, $\beta$, and $\bar{\beta}^\lam$, respectively.
\end{proof}

\begin{proposition}\label{prop:propa_bound} (Noise propagation error bound). 
	Let $ \beta_{\mathbf{X}}^\lam, \bar{\beta}^\lam$ be defined by (\ref{eq:beta_tilde}), (\ref{eq:beta_bar}). Then with probability at least $1 - \delta$ it holds
	$$\norm{\bar{\beta}^\lam - \beta_{\mathbf{X}}^\lam}_\hk \leq c \left( \gamma_{- \frac{1}{2}}^2 + \gamma_{-1}^2 \right)^{\frac12} \Xi^{\frac12} \frac{1}{\sqrt{\lam}} \left( m^{- \frac{1}{2}} + n^{- \frac{1}{2}} \right) \sqrt{\mathcal{N}_\infty (\lambda)}  \left( \log^\frac{1}{2} \frac{1}{\delta}\right) .$$
\end{proposition}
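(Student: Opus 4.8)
The plan is to exploit the common operator factor shared by $\bar\beta^\lam$ and $\beta_{\mathbf X}^\lam$. Since $\bar\beta^\lam = g_\lam(\sxp^*\sxp)\,\sxp^*\sxp\beta$ and $\beta_{\mathbf X}^\lam = g_\lam(\sxp^*\sxp)\,\sxq^*\sxq\mathbf 1$, their difference collapses to
\[
\bar\beta^\lam - \beta_{\mathbf X}^\lam = g_\lam(\sxp^*\sxp)\left(\sxp^*\sxp\beta - \sxq^*\sxq\mathbf 1\right),
\]
so the whole estimate reduces to controlling $g_\lam(\sxp^*\sxp)$ acting on the empirical residual $\sxp^*\sxp\beta - \sxq^*\sxq\mathbf 1$. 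This is exactly the object handled by Lemma \ref{lem1}, once that residual is preconditioned by $(\lam I + J_p^*J_p)^{-1/2}$.

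First I would insert two resolution factors to bridge between $g_\lam(\sxp^*\sxp)$ and the quantity bounded in Lemma \ref{lem1}, writing the right-hand side as a product $A_1 A_2 A_3$ with
\[
A_1 = g_\lam(\sxp^*\sxp)(\lam I + \sxp^*\sxp)^{1/2},\quad A_2 = (\lam I + \sxp^*\sxp)^{-1/2}(\lam I + J_p^*J_p)^{1/2},
\]
and $A_3 = (\lam I + J_p^*J_p)^{-1/2}(\sxp^*\sxp\beta - \sxq^*\sxq\mathbf 1)$, and then applying submultiplicativity of the norm. The factor $A_3$ is bounded directly by Lemma \ref{lem1}, which with probability at least $1-\delta$ supplies $\bigl(1+\sqrt{2\log\frac{2}{\delta}}\bigr)\sqrt{\mathcal N_\infty(\lam)}\sqrt{b_0/n + 1/m}$; absorbing numerical constants and using $\sqrt{b_0/n+1/m}\le c(n^{-1/2}+m^{-1/2})$ turns this into $c\,\sqrt{\mathcal N_\infty(\lam)}\,(m^{-1/2}+n^{-1/2})\log^{1/2}(1/\delta)$. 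For $A_1$, since both $g_\lam$ and $(\lam I + \cdot)^{1/2}$ are functions of the self-adjoint $\sxp^*\sxp$ whose spectrum lies in $[0,c]$, spectral calculus gives $\norm{A_1}_{\hk\to\hk}\le \sup_{0<t\le c}|g_\lam(t)|(\lam+t)^{1/2}$, and the elementary bound $|g_\lam(t)|^2(\lam+t) = \lam|g_\lam(t)|^2 + t|g_\lam(t)|^2 \le (\gamma_{-1}^2 + \gamma_{-\frac12}^2)/\lam$ follows at once from the last two inequalities in (\ref{eq:gen_reg3.1.1}); hence $\norm{A_1}_{\hk\to\hk}\le (\gamma_{-\frac12}^2 + \gamma_{-1}^2)^{1/2}\lam^{-1/2}$.

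The factor $A_2$ is the delicate one and is where I expect the main obstacle. Here $\norm{A_2}^2 = \norm{A_2^*A_2} = \norm{(\lam I + J_p^*J_p)^{1/2}(\lam I + \sxp^*\sxp)^{-1}(\lam I + J_p^*J_p)^{1/2}}$, and this symmetric operator is similar (via conjugation by $(\lam I + J_p^*J_p)^{1/2}$) to the operator $(\lam I + J_p^*J_p)(\lam I + \sxp^*\sxp)^{-1}$ defining $\Xi$ in (\ref{def:Xi}); being self-adjoint and positive, its norm equals its spectral radius, which coincides with that of the similar operator and is therefore no larger than $\Xi$. This yields $\norm{A_2}_{\hk\to\hk}\le \Xi^{1/2}$, and multiplying the three bounds gives the claimed estimate. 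The care in this step is precisely not to conflate operator norm with spectral radius for the non-self-adjoint $(\lam I + J_p^*J_p)(\lam I + \sxp^*\sxp)^{-1}$, and to justify that all the fractional powers and rearrangements are legitimate because $J_p^*J_p$ and $\sxp^*\sxp$ are positive self-adjoint operators with spectra in a common bounded interval. Everything else is spectral-calculus bookkeeping, and since $\Xi$ is kept as the random quantity itself rather than its high-probability bound, the single probabilistic event is the one from Lemma \ref{lem1}, so no union bound is required and the confidence $1-\delta$ is inherited directly.
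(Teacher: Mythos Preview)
Your proof is correct and follows essentially the same route as the paper: the same three-factor decomposition $A_1A_2A_3$, the same spectral-calculus bound $\norm{A_1}\le(\gamma_{-\frac12}^2+\gamma_{-1}^2)^{1/2}\lam^{-1/2}$, and the same appeal to Lemma~\ref{lem1} for $A_3$. The only difference is in the bound on $A_2$: the paper invokes the Cordes inequality $\norm{A^{1/2}B^{1/2}}\le\norm{AB}^{1/2}$ with $A=\lam I+J_p^*J_p$ and $B=(\lam I+\sxp^*\sxp)^{-1}$, whereas you reach the same conclusion $\norm{A_2}\le\Xi^{1/2}$ via the similarity/spectral-radius argument---both are valid and equivalent here.
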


\begin{proof}
	From (\ref{def:Xi}) and well-known Cordes inequality we have
	\begin{align}\label{eq:1/2Xi}
		\norm{(\lam I + J_p^* J_p)^{1/2} (\lam I + \sxp^* \sxp)^{-1/2}}_{\hk \to \hk} &\leq \Xi^{1/2}.
	\end{align}
	 Then using (\ref{eq:gen_reg3.1.1}) and Lemma \ref{lem1}, we can continue
	\begin{align*}
		\norm{\bar{\beta}^\lam - \beta_{\mathbf{X}}^\lam}_\hk \leq&  \norm{g_\lam (\sxp^* \sxp) (\sxq^* \sxq \mathbf{1} - \sxp^* \sxp \beta )}_\hk \\
		\leq & \left\lVert g_\lam (\sxp^* \sxp) (\lam I + \sxp^* \sxp)^\frac12 \right\lVert_\hk  \left\lVert (\lam I + \sxp^* \sxp)^{-\frac{1}{2}} (\lam I + J^*_p J_p)^\frac12 \right\lVert_\hk \times \\
		&\times \left\lVert (\lam I + J^*_p J_p)^{-\frac12} (\sxq^* \sxq \mathbf{1} - \sxp^* \sxp \beta )
		\right\lVert_\hk\\
		\leq &\sup_{0 < t \leq c} |g_\lam (t) (\lam +t)^\frac12| \Xi^{\frac12}   \norm{(\lambda I + J_p^* J_p)^{-\frac{1}{2}} (S_{X_p}^* S_{X_p} \beta - S_{X_T}^* S_{X_T} \mathbf{1}) }_{\mathcal{H}_K} \\
		\leq &(\gamma_{- \frac{1}{2}}^2 + \gamma_{-1}^2 )^{\frac12} \frac{1}{\sqrt{\lam}}  \Xi^{\frac12} \left( 1 + \sqrt{2 \log \frac{2}{\delta}} \right) \left( \sqrt{\frac{b_0^2}{n} + \frac{1}{m}} \right) \sqrt{\mathcal{N}_\infty (\lambda)} \\
		\leq & c \left( \gamma_{- \frac{1}{2}}^2 + \gamma_{-1}^2 \right)^{\frac12} \frac{1}{\sqrt{\lam}} \Xi^{\frac12} \left( m^{- \frac{1}{2}} + n^{- \frac{1}{2}} \right) \sqrt{\mathcal{N}_\infty (\lambda)}  \left( \log^\frac{1}{2} \frac{1}{\delta}\right) .
	\end{align*}
\end{proof}

The next proposition summaries of Proposition \ref{prop:appro_bound} and \ref{prop:propa_bound}.

\begin{proposition}\label{prop:combine_app_propa_bound}
	If $\beta$ meets source condition (\ref{eq:smooth3.2.1}), where  $\varphi$ is an operator monotone index function, then with probability at least $1 - \delta$ it holds
	\begin{align*}
		\norm{\beta - \beta^\lam_{\mathbf{X}}}_\hk \leq c \left( \Upsilon(\lam)\varphi(\lam) + \frac{1}{\sqrt{\lam}} [\Upsilon (\lam) ] ^\frac12 (m^{-\frac12} + n^{-\frac12}) \sqrt{\mathcal{N}_\infty(\lam)} \right) \left( \log \frac{2}{\delta}\right)^2.
	\end{align*}
	
	If $\beta$ meets source condition (\ref{eq:smooth3.2.1}), where $\varphi = \vartheta \psi \in \mathcal{F} (0,c)$ with $c$ is large enough, and if the qualification of the regularization $g_\lam$ covers $\vartheta (t) t^\frac32$ then with probability at least $1 - \delta$ the total error allows for the bound
	\begin{align*}
		\norm{\beta - \beta^\lam_{\mathbf{X}}}_\hk \leq c \left( \Upsilon(\lam)\varphi(\lam) + n^{-\frac12}+ \frac{1}{\sqrt{\lam}} [\Upsilon (\lam) ] ^\frac12 (m^{-\frac12} + n^{-\frac12}) \sqrt{\mathcal{N}_\infty(\lam)} \right) \left( \log \frac{2}{\delta}\right)^2.
	\end{align*}
\end{proposition}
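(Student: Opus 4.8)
The plan is to apply the error decomposition (\ref{eq:decom_bound}), substitute the two constituent estimates from Propositions \ref{prop:appro_bound} and \ref{prop:propa_bound}, and then re-express everything in terms of $\Upsilon(\lam)$ rather than $\Xi$. The essential algebraic fact I would establish first is a comparison between $\Xi$ and $\Upsilon(\lam)$. From the definition (\ref{def:Upsilon_lam}) and the concentration bound (\ref{def:Xi}) one has
$$\Xi \leq 2\left[\left(\frac{\mathcal{B}_{n,\lam}}{\sqrt{\lam}}\right)^2 \left(\log\frac{2}{\delta}\right)^2 + 1\right],$$
and since $\log\frac{2}{\delta}\geq 1$ in the relevant range of $\delta$, the factor $(\log\frac{2}{\delta})^2\geq 1$ may be pulled out to yield $\Xi \leq c\,\Upsilon(\lam)(\log\frac{2}{\delta})^2$ and hence $\Xi^{\frac12}\leq c\,[\Upsilon(\lam)]^{\frac12}\log\frac{2}{\delta}$. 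These two inequalities are the bridge that converts the $\Xi$-dependent estimates into the $\Upsilon$-dependent estimates appearing in the statement.

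For the approximation error under the operator-monotone hypothesis, item~1 of Proposition \ref{prop:appro_bound} gives $\norm{\beta - \bar{\beta}^\lam}_\hk \leq c\,\Xi\,\varphi(\lam)$; substituting $\Xi \leq c\,\Upsilon(\lam)(\log\frac{2}{\delta})^2$ produces the term $c\,\Upsilon(\lam)\varphi(\lam)(\log\frac{2}{\delta})^2$. For the noise propagation term, Proposition \ref{prop:propa_bound} furnishes a bound proportional to $\Xi^{\frac12}\lam^{-\frac12}(m^{-\frac12}+n^{-\frac12})\sqrt{\mathcal{N}_\infty(\lam)}\,\log^{\frac12}\frac{1}{\delta}$; inserting $\Xi^{\frac12}\leq c\,[\Upsilon(\lam)]^{\frac12}\log\frac{2}{\delta}$ and using the trivial inequality $\log^{\frac12}\frac{1}{\delta}\leq\log\frac{2}{\delta}$ collects the two logarithmic factors into $(\log\frac{2}{\delta})^2$, yielding exactly the second summand of item~1 in the statement. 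Adding the two pieces through (\ref{eq:decom_bound}) gives the first claimed inequality.

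For the second item the only change is in the approximation error: item~2 of Proposition \ref{prop:appro_bound} supplies the additional summand $\psi(\kappa_0)\gamma_0\norm{J_p^* J_p - \sxp^* \sxp}_{\hk\to\hk}$, which I would bound by the concentration estimate (\ref{1st_bound_of_3}), giving $c\,n^{-\frac12}\log\frac{2}{\delta}\leq c\,n^{-\frac12}(\log\frac{2}{\delta})^2$; this is precisely the extra $n^{-\frac12}$ term. The noise propagation term is treated identically to item~1, and the hypothesis that the qualification covers $\vartheta(t)t^{\frac32}$ is carried over verbatim from Proposition \ref{prop:appro_bound}, being needed only to license its second item.

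A final remark concerns the probability. The bounds (\ref{1st_bound_of_3})--(\ref{def:Xi}) invoked through Proposition \ref{prop:propa_bound}, and the estimate of Lemma \ref{lem1}, each hold with probability at least $1-\delta$; intersecting these events and rescaling $\delta$ (which only alters constants inside the logarithm) preserves the stated probability $1-\delta$, the adjustment being absorbed into $(\log\frac{2}{\delta})^2$. I expect the main obstacle to be not conceptual but bookkeeping: one must track the several occurrences of $\log\frac{1}{\delta}$ and $\log\frac{2}{\delta}$ across the two propositions and confirm their product never exceeds the advertised $(\log\frac{2}{\delta})^2$, while checking that the generic constant $c$ depends only on the admissible basic parameters and on $\gamma_0,\gamma_{-\frac{1}{2}},\gamma_{-1}$.
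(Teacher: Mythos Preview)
Your proposal is correct and follows essentially the same approach as the paper: decompose via (\ref{eq:decom_bound}), insert Propositions \ref{prop:appro_bound} and \ref{prop:propa_bound}, and replace $\Xi$ by $\Upsilon(\lam)(\log\frac{2}{\delta})^2$ using (\ref{def:Xi}) together with $\log\frac{2}{\delta}\geq 1$. The paper's proof is terser about the $\Xi\to\Upsilon$ conversion and about the union bound on the high-probability events, but the logic is identical.
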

\begin{proof}
	We first prove the results for $\beta$ meets source condition (\ref{eq:smooth3.2.1}), where  $\varphi$ is an operator monotone index function. Using the error estimates in Proposition \ref{prop:appro_bound} and \ref{prop:propa_bound}, and (\ref{def:bn}) - (\ref{def:Xi}), we have 
	\begin{align*}
		\norm{\beta - \beta^\lam_{\mathbf{X}}}_\hk \leq &c (\gamma_0 + \gamma_{-1}) \left[ \left( \frac{\mathcal{B}_{n,\lam} \log \frac{2}{\delta}}{\sqrt{\lam}}   \right)^2 + 1  \right] \varphi (\lam) \\
		& + \left( \gamma_{- \frac{1}{2}}^2 + \gamma_{-1}^2 \right)^{1/2} \frac{1}{\sqrt{\lam}}   \sqrt{ \left( \frac{\mathcal{B}_{n,\lam} \log \frac{2}{\delta}}{\sqrt{\lam}}   \right)^2 + 1  } \left( \log^\frac{1}{2} \frac{1}{\delta}\right) \left( m^{- \frac{1}{2}} + n^{- \frac{1}{2}} \right) \sqrt{\mathcal{N}_\infty (\lambda)} \\
		\leq & c \left( \Upsilon(\lam)\varphi(\lam) + \frac{1}{\sqrt{\lam}} [\Upsilon (\lam) ] ^\frac12 (m^{-\frac12} + n^{-\frac12}) \sqrt{\mathcal{N}_\infty(\lam)} \right) \left( \log \frac{2}{\delta}\right)^2.
	\end{align*}
	Similarly, if $\beta$ meets source condition (\ref{eq:smooth3.2.1}), with $\varphi = \vartheta \psi \in \mathcal{F} (0,c)$ and the qualification of the regularization $g_\lam$ covers $\vartheta (t) t^\frac32$, we have 
	\begin{align*}
		\norm{\beta - \beta^\lam_{\mathbf{X}}}_\hk \leq & c \left[ \left( \frac{\mathcal{B}_{n,\lam} \log \frac{2}{\delta}}{\sqrt{\lam}}   \right)^2 + 1  \right] \varphi (\lam) + \psi(\kappa_0) \gamma_0 \frac{4 \kappa_0^2}{\sqrt{n}} \log \frac{2}{\delta} \\
		&+ c \frac{1}{\sqrt{\lam}}   \sqrt{ \left( \frac{\mathcal{B}_{n,\lam} \log \frac{2}{\delta}}{\sqrt{\lam}}   \right)^2 + 1  } \left( \log^\frac{1}{2} \frac{1}{\delta}\right) \left( m^{- \frac{1}{2}} + n^{- \frac{1}{2}} \right) \sqrt{\mathcal{N}_\infty (\lambda)}  \\
		\leq & c \left( \Upsilon(\lam)\varphi(\lam) + n^{-\frac12}+ \frac{1}{\sqrt{\lam}} [\Upsilon (\lam) ] ^\frac12 (m^{-\frac12} + n^{-\frac12}) \sqrt{\mathcal{N}_\infty(\lam)} \right) \left( \log \frac{2}{\delta}\right)^2.
	\end{align*}
\end{proof}

We will also need the following statement proven in  \cite{lu2020} as Lemma 4.6.
\begin{lemma}\cite{lu2020}\label{lem4.6}
	There exists a $\lam_*$ satisfying $\mathcal{N}(\lam_*)/\lam_* = n$. For $\lam_* \leq \lam \leq \kappa_0$, there holds
	\begin{align}
		\mathcal{B}_{n,\lam} \leq \frac{2\kappa_0}{\sqrt{n}} \left( \sqrt{2} \kappa_0 + \sqrt{\mathcal{N} (\lam)} \right).
	\end{align}
	This yields 
	\begin{align}
		\Upsilon (\lam) \leq 1 + (4\kappa_0^2 + 2 \kappa_0)^2
	\end{align}
	and also
	\begin{align}
		\mathcal{B}_{n,\lam} \left( \mathcal{B}_{n,\lam} + \sqrt{\lam} \right) \leq (1+4\kappa_0)^4 \min \left\{ \lam, \sqrt{\frac{\kappa_0}{n}} \right\},
	\end{align}
for $n$ large enough.
\end{lemma}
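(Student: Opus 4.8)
The plan is to treat the statement as a chain of deterministic estimates on the effective dimension, exploiting throughout the spectral representation $\mathcal{N}(\lam) = \mathrm{tr}\,[(\lam I + J_p^* J_p)^{-1} J_p^* J_p] = \sum_i \frac{\sigma_i}{\lam + \sigma_i}$, where $\{\sigma_i\}$ are the eigenvalues of $J_p^* J_p$. First I would establish existence of $\lam_*$: the map $\lam \mapsto \mathcal{N}(\lam)/\lam = \sum_i \frac{\sigma_i}{\lam(\lam + \sigma_i)}$ is continuous and strictly decreasing, tending to $+\infty$ as $\lam \to 0^+$ and to $0$ as $\lam \to \infty$, so by the intermediate value theorem there is a unique $\lam_*$ with $\mathcal{N}(\lam_*)/\lam_* = n$. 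Along the way I would record the facts used repeatedly: $\mathcal{N}$ is non-increasing; $\mathcal{N}(\lam) \leq \kappa_0^2/\lam$ (since $C_\lam(x) = \norm{(\lam I + J_p^* J_p)^{-1/2} K(\cdot,x)}_\hk^2 \leq \kappa_0^2/\lam$); and, because $\mathcal{N}(\lam)/\lam$ is non-increasing, $\mathcal{N}(\lam) \leq n\lam$ for every $\lam \geq \lam_*$. From $n\lam_* = \mathcal{N}(\lam_*) \leq \kappa_0^2/\lam_*$ one also gets $\lam_* \leq \kappa_0/\sqrt{n}$, so the interval $[\lam_*, \kappa_0]$ is nonempty.

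For the first inequality I would note that in $\mathcal{B}_{n,\lam} = \frac{2\kappa_0}{\sqrt{n}}\big(\frac{\kappa_0}{\sqrt{n\lam}} + \sqrt{\mathcal{N}(\lam)}\big)$ only the summand $\frac{\kappa_0}{\sqrt{n\lam}}$ is not yet of the target form, so it suffices to show $\frac{\kappa_0}{\sqrt{n\lam}} \leq \sqrt{2}\,\kappa_0$, i.e.\ $n\lam \geq \tfrac12$. Since $\lam \geq \lam_*$ we have $n\lam \geq n\lam_* = \mathcal{N}(\lam_*)$, and $\mathcal{N}(\lam_*) \geq \frac{\sigma_1}{\lam_* + \sigma_1} \geq \tfrac12$ as soon as $\lam_* \leq \sigma_1 = \norm{J_p^* J_p}_{\hk \to \hk}$. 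Because $\lam_* \leq \kappa_0/\sqrt{n} \to 0$, this holds for $n$ large enough, which is precisely the stated hypothesis; this yields the first displayed bound. For the bound on $\Upsilon(\lam) = (\mathcal{B}_{n,\lam}/\sqrt{\lam})^2 + 1$ I would divide the first inequality by $\sqrt{\lam}$ and split: the term $\frac{2\kappa_0}{\sqrt{n\lam}}\,\sqrt{2}\,\kappa_0$ is at most $4\kappa_0^2$ again via $n\lam \geq \tfrac12$, while $\frac{2\kappa_0}{\sqrt{n}}\sqrt{\mathcal{N}(\lam)/\lam} \leq \frac{2\kappa_0}{\sqrt{n}}\sqrt{n} = 2\kappa_0$ via $\mathcal{N}(\lam)/\lam \leq n$. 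Hence $\mathcal{B}_{n,\lam}/\sqrt{\lam} \leq 4\kappa_0^2 + 2\kappa_0$, and squaring and adding $1$ gives $\Upsilon(\lam) \leq 1 + (4\kappa_0^2 + 2\kappa_0)^2$.

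Finally, writing $P := \mathcal{B}_{n,\lam}(\mathcal{B}_{n,\lam} + \sqrt{\lam}) = \mathcal{B}_{n,\lam}^2 + \mathcal{B}_{n,\lam}\sqrt{\lam}$, I would prove two bounds and combine them, using that a quantity lying below both $C\lam$ and $C\sqrt{\kappa_0/n}$ lies below their minimum. The bound $P \leq (1+4\kappa_0)^4 \lam$ follows from the intermediate estimate $\mathcal{B}_{n,\lam} \leq (4\kappa_0^2 + 2\kappa_0)\sqrt{\lam}$ just obtained, giving $P \leq (4\kappa_0^2 + 2\kappa_0)(4\kappa_0^2 + 2\kappa_0 + 1)\lam$, and one checks that $(4\kappa_0^2 + 2\kappa_0) \leq (1+4\kappa_0)^2$ and $(4\kappa_0^2 + 2\kappa_0 + 1) \leq (1+4\kappa_0)^2$, so the product is at most $(1+4\kappa_0)^4$. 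For the bound $P \leq (1+4\kappa_0)^4 \sqrt{\kappa_0/n}$ the estimate $\mathcal{N}(\lam) \leq n\lam$ is too lossy when $\lam$ is of order $\kappa_0$, so here I would keep the explicit $1/\sqrt{n}$ factor and use $\mathcal{N}(\lam) \leq \kappa_0^2/\lam$ (so that $\sqrt{\mathcal{N}(\lam)\,\lam} \leq \kappa_0$) for the term $\mathcal{B}_{n,\lam}\sqrt{\lam}$, and $\mathcal{N}(\lam) \leq \mathcal{N}(\lam_*) = n\lam_* \leq \kappa_0\sqrt{n}$ for the term $\mathcal{B}_{n,\lam}^2$, together with $\lam \leq \kappa_0$ and $n\lam \geq \tfrac12$, so that each contribution becomes a constant multiple of $\sqrt{\kappa_0/n}$. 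Absorbing the constants into $(1+4\kappa_0)^4$ closes the argument. The main obstacle is exactly this last step: the two regimes of the minimum force different majorants for $\mathcal{N}(\lam)$, and the constant bookkeeping must be organized so that every term collapses into the single factor $(1+4\kappa_0)^4$.
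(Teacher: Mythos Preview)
The paper does not supply its own proof of this lemma: it is imported verbatim from \cite{lu2020} (there also labelled Lemma 4.6) and stated without argument. There is therefore no in-paper proof to compare against. Your proposal stands on its own and is essentially the standard derivation: existence of $\lam_*$ via strict monotonicity and the intermediate value theorem, reduction of the first bound to $n\lam \geq \tfrac12$ through $n\lam_* = \mathcal{N}(\lam_*) \geq \sigma_1/(\lam_*+\sigma_1)$, the $\Upsilon$ bound by separately controlling the two summands of $\mathcal{B}_{n,\lam}/\sqrt{\lam}$ with $n\lam \geq \tfrac12$ and $\mathcal{N}(\lam)/\lam \leq n$, and the product bound by handling the two branches of the minimum with different majorants for $\mathcal{N}(\lam)$.

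Two small points are worth tightening. First, the lemma as stated attaches ``for $n$ large enough'' only to the last display, yet your route to the first display already needs $\lam_* \leq \sigma_1$; this is harmless in the paper's use of the lemma but should be flagged. Second, in the $\sqrt{\kappa_0/n}$ branch you leave the final absorption of constants into $(1+4\kappa_0)^4$ implicit. Carrying it through, one gets $P \leq (16\kappa_0^4 + 2\kappa_0^2)n^{-1} + (8\kappa_0^{5/2} + 2\kappa_0^{3/2})\sqrt{\kappa_0/n}$; the $n^{-1}$ piece is swallowed by $\sqrt{\kappa_0/n}$ for $n$ large, and the remaining coefficient $8\kappa_0^{5/2} + 2\kappa_0^{3/2}$ is indeed strictly below $(1+4\kappa_0)^4$ for all $\kappa_0 > 0$, so the claim holds, but writing this out would make the argument self-contained rather than impressionistic.
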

For $\lam > \lam_*$ we can make the statement of Proposition \ref{prop:combine_app_propa_bound} more transparent.
\begin{theorem}\label{thm4.5}
	Let $K$ satisfies Assumption \ref{assum:source_cond_kernel}, and $\lam \geq \lam^*$. Then under the assumptions of Proposition \ref{prop:combine_app_propa_bound}, with probability at least $1 - \delta$, it holds
	\begin{align*}
		\norm{\beta - \beta^\lam_{\mathbf{X}}}_\hk \leq c \left( \varphi(\lam) +  (m^{-\frac12} + n^{-\frac12}) \frac{\xi (\lam)}{\lam} \right) \left( \log \frac{2}{\delta}\right)^2.
	\end{align*}
	Consider $\theta_{\varphi,\xi} (t) = \frac{\varphi(t) t}{\xi(t)}$ and $\lam = \lambda_{m,n} = \theta_{\varphi,\xi}^{-1} (m^{-\frac12} + n^{-\frac12})$, then
	$$\norm{\beta - \beta^\lam_{\mathbf{X}}}_\hk \leq c \varphi\left(\theta_{\varphi,\xi}^{-1} (m^{-\frac12} + n^{-\frac12})\right) \log^2 \frac1\delta.$$
\end{theorem}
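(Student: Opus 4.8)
The plan is to specialize Proposition~\ref{prop:combine_app_propa_bound} to the regime $\lam\geq\lam_*$ and then balance the two error contributions by the stated choice of $\lam$. First I would take the operator-monotone bound of Proposition~\ref{prop:combine_app_propa_bound} and simplify its two factors $\Upsilon(\lam)$ and $\mathcal{N}_\infty(\lam)$. Since $\lam\geq\lam_*$, Lemma~\ref{lem4.6} gives $\Upsilon(\lam)\leq 1+(4\kappa_0^2+2\kappa_0)^2$, a constant, so both $\Upsilon(\lam)\varphi(\lam)$ and $[\Upsilon(\lam)]^{1/2}$ may be absorbed into $c$. Next, Assumption~\ref{assum:source_cond_kernel} via Lemma~\ref{lem:N_infty_bound} yields $\sqrt{\mathcal{N}_\infty(\lam)}\leq\sqrt{c}\,\xi(\lam)/\sqrt{\lam}$, whence $\frac{1}{\sqrt{\lam}}\sqrt{\mathcal{N}_\infty(\lam)}\leq c\,\xi(\lam)/\lam$. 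Substituting these two estimates into Proposition~\ref{prop:combine_app_propa_bound} produces the first displayed bound of the theorem. To see that the same bound also covers the second case of Proposition~\ref{prop:combine_app_propa_bound} (where a stray $n^{-1/2}$ appears), I would note that operator concavity of $\xi$ with $\xi(0)=0$ makes $\xi(t)/t$ non-increasing, so $\xi(\lam)/\lam\geq\xi(\kappa_0)/\kappa_0$ is bounded below; hence $(m^{-\frac12}+n^{-\frac12})\xi(\lam)/\lam\geq c\,n^{-1/2}$ already dominates the extra term.

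For the optimized bound I would insert $\lam=\lam_{m,n}=\theta_{\varphi,\xi}^{-1}(m^{-\frac12}+n^{-\frac12})$. By definition of $\theta_{\varphi,\xi}$ this means $\varphi(\lam)\lam/\xi(\lam)=m^{-\frac12}+n^{-\frac12}$, so the two summands of the first bound coincide:
\[
(m^{-\tfrac12}+n^{-\tfrac12})\frac{\xi(\lam)}{\lam}=\frac{\varphi(\lam)\lam}{\xi(\lam)}\cdot\frac{\xi(\lam)}{\lam}=\varphi(\lam).
\]
The bracket therefore collapses to $2\varphi(\lam_{m,n})$, and absorbing constants and $\log 2$ into $c$ gives the final estimate $\norm{\beta-\beta^\lam_{\mathbf{X}}}_\hk\leq c\,\varphi\big(\theta_{\varphi,\xi}^{-1}(m^{-\frac12}+n^{-\frac12})\big)\log^2\frac1\delta$.

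The step requiring the most care is the admissibility of the chosen parameter, namely verifying $\lam_{m,n}\geq\lam_*$ so that the first bound (valid only for $\lam\geq\lam_*$) actually applies at $\lam=\lam_{m,n}$. I would first record that $\theta_{\varphi,\xi}$ is strictly increasing — because $\varphi$ is increasing and $t/\xi(t)$ is non-decreasing by the concavity of $\xi$ — so that $\theta_{\varphi,\xi}^{-1}$ is well defined and monotone; then $\lam_{m,n}\geq\lam_*$ reduces to $m^{-\frac12}+n^{-\frac12}\geq\theta_{\varphi,\xi}(\lam_*)$. Combining $\mathcal{N}(\lam_*)=n\lam_*$ with Lemma~\ref{lem:N_infty_bound} gives $\lam_*/\xi(\lam_*)\leq\sqrt{c/n}$, so $\theta_{\varphi,\xi}(\lam_*)=\varphi(\lam_*)\lam_*/\xi(\lam_*)\leq\varphi(\lam_*)\sqrt{c/n}$; since $\lam_*\to0$ and $\varphi(0)=0$, this is $o(n^{-1/2})$ and hence below $m^{-\frac12}+n^{-\frac12}$ for all large $n$, which secures the claim.
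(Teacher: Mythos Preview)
Your proof is correct and follows exactly the route the paper intends: combine Proposition~\ref{prop:combine_app_propa_bound} with Lemma~\ref{lem4.6} (to reduce $\Upsilon(\lam)$ to a constant for $\lam\geq\lam_*$) and Lemma~\ref{lem:N_infty_bound} (to convert $\sqrt{\mathcal{N}_\infty(\lam)}/\sqrt{\lam}$ into $\xi(\lam)/\lam$), then balance the two terms by the choice $\lam=\lam_{m,n}$. The paper itself leaves the proof implicit and defers the verification $\lam_{m,n}\geq\lam_*$ to Remark~\ref{rem_sec4} and Figure~\ref{fig_lam}; your explicit handling of that step via $\mathcal{N}(\lam_*)\leq\mathcal{N}_\infty(\lam_*)\leq c\,\xi^2(\lam_*)/\lam_*$ and of the stray $n^{-1/2}$ term in the second case of Proposition~\ref{prop:combine_app_propa_bound} is a welcome addition.
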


\begin{remark}\label{rem_sec4}
	As we already mentioned, the accuracy of the approximation (\ref{eq:beta_g_lam}) in RKHS has also been estimated in Theorem 2 of \cite{gizewski2022}. In our terms, the result of \cite{gizewski2022} can be written as follows:
	\begin{align}\label{bound_in_acha}
		\norm{\beta - \beta_{\mathbf{X}}^\lam}_\hk  \leq c \varphi \left(\theta_\varphi^{-1} (m^{-\frac12} + n^{-\frac12})  \right) \log \frac{1}{\delta}.
	\end{align}
	where $\theta_\varphi (t) = \varphi (t) t$.
     To simplify the comparison of Theorem \ref{thm4.5} and (\ref{bound_in_acha}), let us consider the case when $\beta$ meets the source condition (\ref{eq:smooth3.2.1}) with $\varphi (t) = t^\eta$. 
    In this case the bound (\ref{bound_in_acha}) can be reduced to 
	\begin{align}\label{rate_in_acha}
		\norm{\beta - \beta_{\mathbf{X}}^\lam}_\hk  = O \left((m^{-\frac12} + n^{-\frac12})^\frac{\eta }{\eta +1}\right).
	\end{align}
	It is noteworthy that the error bound established in Theorem 2 of \cite{gizewski2022} does not take into consideration the capacity of $\hk$. Such an additional factor can be accounted in terms of Assumption \ref{assum:source_cond_kernel} . Assume that $K$ satisfies Assumption 3.2 with $\xi (t) = t^\varsigma, 0< \varsigma  \leq \frac12$, then for $\lam = \lam_{m,n} = \theta_{\varphi,\xi}^{-1} (m^{-\frac12} + n^{-\frac12})$, the bound in Theorem \ref{thm4.5} gives  
	\[
	\norm{\beta - \beta_{\mathbf{X}}^\lam}_\hk = O \left((m^{-\frac12} + n^{-\frac12})^\frac{\eta }{\eta +1 - \varsigma } \right),
	\]
 that is better than the order of accuracy given by (\ref{rate_in_acha}). Then one can conclude that the bound in Theorem \ref{thm4.5} obtained by our argument generalizes, specifies, and refines the results of \cite{gizewski2022}. 
	
Recall that the bounds in Theorem \ref{thm4.5} 
 are valid for $\lambda > \lambda_*$. Using Lemma \ref{lem:N_infty_bound} and \ref{lem4.6} one can prove that $\lam = \lam_{m,n}$ also satisfies the above inequality. The corresponding proof can be easily recovered from Figure \ref{fig_lam}.
\end{remark}
\begin{figure}[!ht]
    \centering
    \includegraphics[scale=0.35]{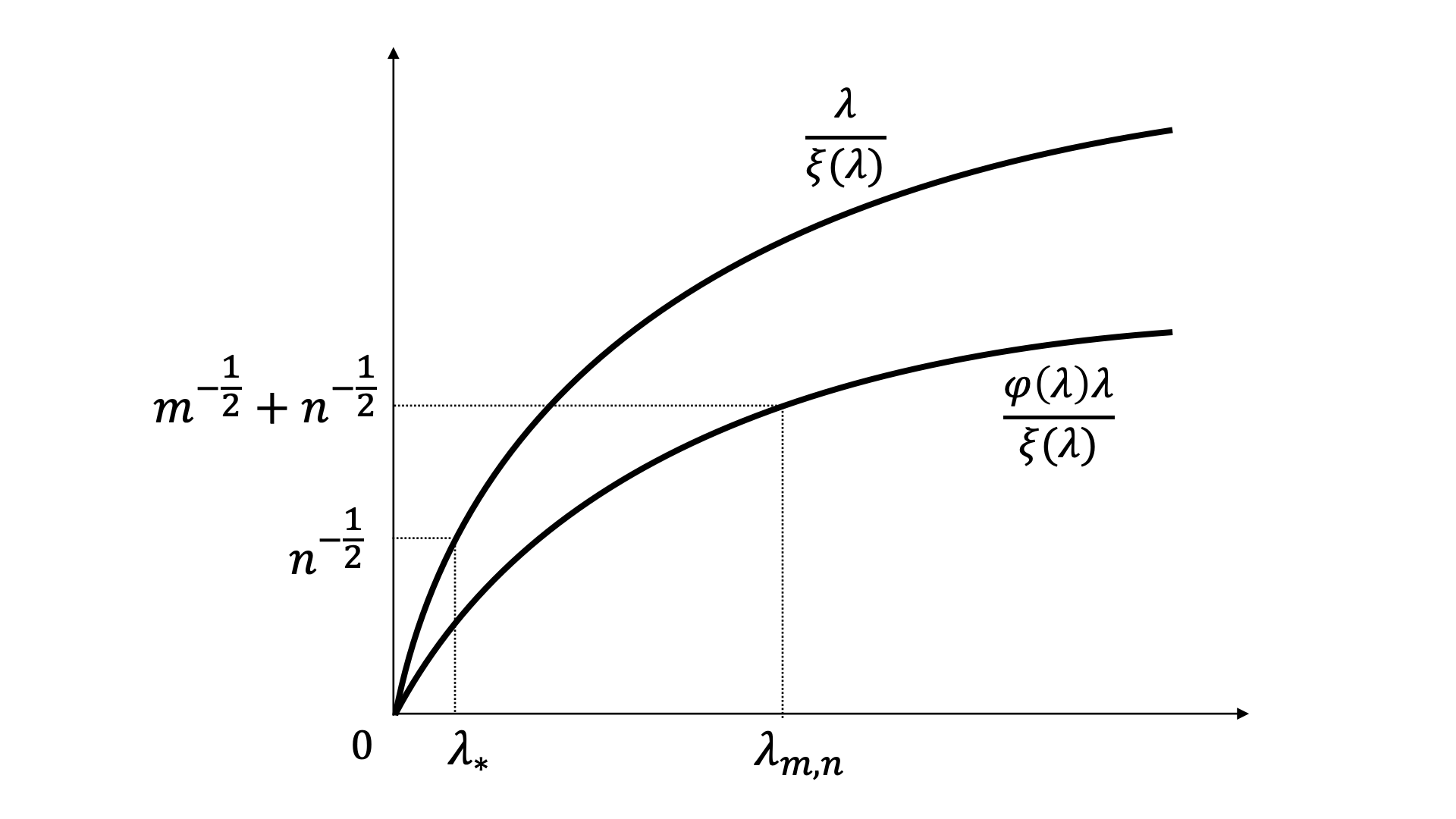}
    \caption{Relation between $\lam_*$ and $\lam_{m,n}$.}
    \label{fig_lam}
\end{figure}
\section{Error bounds for the pointwise evaluation}
In this section, we discuss the error between point values of $\beta (x)$ and $\beta_{\mathbf{X}}^\lam (x) $ for any $x \in \bX$. In view of the reproducing property of $K$ we have 
\begin{align}
	|\beta (x) - \beta^\lam_{\mathbf{X}} (x) | = \big|  \innerpro{K_{x},\beta - \beta^\lam_{\mathbf{X}} }_\hk \big| 
	& = \big|  \innerpro{K(\cdot , x),\beta - \beta^\lam_{\mathbf{X}} }_\hk \big|\nonumber\\
	&= \big| \innerpro{\xi(J^*_p J_p)v_x, \beta - \beta^\lam_{\mathbf{X}}}_\hk \big| \nonumber\\
	&\leq c \norm{\xi (J^*_p J_p) (\beta - \beta^\lam_{\mathbf{X}})  }_\hk. \label{eq:pw_beta}
\end{align}
Similarly, we obtain 
\begin{align*}
	|\beta (x) - \bar{\beta}^\lam (x) | &\leq c \norm{\xi (J^*_p J_p) (\beta - \bar{\beta}^\lam)}_\hk, \\
	|\bar{\beta}^\lam (x) - \beta^\lam_{\mathbf{X}} (x) | &\leq c \norm{\xi (J^*_p J_p) (\bar{\beta}^\lam - \beta^\lam_{\mathbf{X}}) }_\hk,
\end{align*}
that allows for the following decomposition of the error bound
\begin{align}\label{eq:pw_decom_bound}
		|\beta (x) - \beta^\lam_{\mathbf{X}} (x) | \leq c \left(\norm{\xi (J^*_p J_p) (\beta - \bar{\beta}^\lam)}_\hk + \norm{\xi (J^*_p J_p)(\bar{\beta}^\lam - \beta^\lam_{\mathbf{X}}) }_\hk \right).
\end{align}

In the following propositions, we estimate the terms on the right-hand side of (\ref{eq:pw_decom_bound}). 
\begin{proposition}\label{prop:pw_appro_bound} Let Assumption \ref{assum:source_cond_kernel} be satisfied. Assume also that $\beta$ and the regularization indexed by $g_\lam$ meet the conditions of Proposition \ref{prop:appro_bound}. Then 
		\begin{align*}
		\norm{\xi(J_p^*J_p) (\beta - \bar{\beta}^\lam )}_\hk \leq c  \xi(\lam)  \left( \Xi^{\frac32}\varphi(\lam) +  ( \gamma_0 + \gamma_\frac12 ) \Xi^{\frac12} \psi(\kappa_0) \norm{J^*_p J_p - \sxp^* \sxp }_{\hk \rightarrow \hk}  \right).
	\end{align*} 
for $\varphi \in \mathcal{F} (0,c)$, while for an operator monotone index function $\varphi$ we have 
	\begin{align*}
	\norm{\xi (J^*_p J_p) (\beta - \bar{\beta}^\lam)}_\hk \leq c (\gamma_0 + \gamma_{\frac{3}{2}}) \Xi^{\frac32} \xi(\lam) \varphi(\lam).
\end{align*}
\end{proposition}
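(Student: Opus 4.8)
The plan is to start from the residual identity $\beta-\bar{\beta}^\lam = r_\lam(\sxp^*\sxp)\beta$, which follows immediately from the definition (\ref{eq:beta_bar}) of $\bar{\beta}^\lam$ and from $r_\lam(t)=1-tg_\lam(t)$. Writing $A:=J_p^*J_p$ and $A_{\bX}:=\sxp^*\sxp$ for brevity, the quantity to control is $\norm{\xi(A)\,r_\lam(A_{\bX})\beta}_\hk$. The first step is to extract the scalar factor $\xi(\lam)$ from the operator $\xi(A)$. Exactly as in the proof of Lemma \ref{lem:N_infty_bound}, Assumption \ref{assum:source_cond_kernel} (that $\xi^2$ is covered by qualification $s=1$) together with Lemma \ref{lem:prop3.1_sergei_book} applied to the Lavrentiev scheme gives, on the spectrum, $\xi^2(t)\le c\,\lam^{-1}(\lam+t)\,\xi^2(\lam)$, hence the operator-norm estimate
\[
\norm{\xi(A)(\lam I + A)^{-1/2}}_{\hk\to\hk}\le c\,\frac{\xi(\lam)}{\sqrt{\lam}}.
\]

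The second step removes the resulting population resolvent at the price of one factor $\Xi^{1/2}$. I would factor $\xi(A)=\big[\xi(A)(\lam I+A)^{-1/2}\big](\lam I+A)^{1/2}$ and transfer $(\lam I+A)^{1/2}$ to the empirical operator via the Cordes inequality (\ref{eq:1/2Xi}). Combining with the first step reduces the problem to estimating
\[
Q:=\norm{(\lam I+A_{\bX})^{1/2}\,r_\lam(A_{\bX})\,\beta}_\hk ,
\]
since then $\norm{\xi(A)r_\lam(A_{\bX})\beta}_\hk\le c\,\xi(\lam)\,\lam^{-1/2}\,\Xi^{1/2}\,Q$. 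Now $Q$ is precisely the quantity handled in the proof of Proposition \ref{prop:appro_bound} (following Proposition 4.3 of \cite{lu2020}), except for the extra factor $(\lam I+A_{\bX})^{1/2}$, which is a function of the \emph{same} operator $A_{\bX}$ as $r_\lam(A_{\bX})$ and therefore only modifies the scalar multiplier. The elementary bound $(\lam+t)^{1/2}\le\lam^{1/2}+t^{1/2}$, fed into the qualification conditions (\ref{eq:gen_reg3.1.1})–(\ref{eq:gen_reg3.1.2}), converts this factor into the half-integer coefficients $\gamma_0+\gamma_{\frac12}$ (resp.\ $\gamma_0+\gamma_{\frac32}$) together with an explicit power $\lam^{1/2}$ that cancels the $\lam^{-1/2}$ from the first step. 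In the operator monotone case one transfers $\varphi(A)$ to $\varphi(A_{\bX})$ by operator monotonicity, paying one further factor $\Xi$ (so $\Xi^{1/2}\cdot\Xi=\Xi^{3/2}$), and then applies Lemma \ref{lem:prop3.1_sergei_book} to $r_\lam(A_{\bX})\varphi(A_{\bX})$, giving $Q\le c(\gamma_0+\gamma_{\frac32})\,\Xi\,\sqrt{\lam}\,\varphi(\lam)$ and hence the second asserted bound. For $\varphi=\vartheta\psi\in\mathcal{F}(0,c)$ one instead splits $\beta=\vartheta(A)\psi(A)\nu_q$ and uses the Lipschitz property of $\vartheta$ (constant $1$) to replace $\vartheta(A)$ by $\vartheta(A_{\bX})$ up to a term proportional to $\norm{A-A_{\bX}}_{\hk\to\hk}$ bounded through $\psi(\kappa_0)$; this produces the two-term bound with leading part $\Xi^{\frac32}\varphi(\lam)$ and the Lipschitz part $(\gamma_0+\gamma_{\frac12})\Xi^{\frac12}\psi(\kappa_0)\norm{J^*_p J_p - \sxp^* \sxp}_{\hk \rightarrow \hk}$.

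The main obstacle is the non-commutativity between $\xi(A)$ and $\varphi(A)$, which are functions of the inaccessible population operator $A=J_p^*J_p$, and $r_\lam(A_{\bX})$, a function of the empirical operator $A_{\bX}=\sxp^*\sxp$; one cannot simply merge $\xi\varphi$ into a single index function and invoke Proposition \ref{prop:appro_bound} directly, and indeed the discrepancy is exactly the extra factor $\Xi^{1/2}$ relative to what such a naive merge would predict. Every interchange of a population operator past an empirical one must be paid for with a resolvent insertion and a factor $\Xi^{1/2}$, so keeping the bookkeeping of these factors consistent—so that they assemble into precisely $\Xi^{\frac32}$ and the $\xi(\lam)\varphi(\lam)$ scale, with the correct half-integer qualification constants—is the delicate part. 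The operator-monotone transfer $\varphi(A)\mapsto\varphi(A_{\bX})$ and, in the $\mathcal{F}(0,c)$ case, the operator-Lipschitz estimate for $\vartheta$ are the technical heart, and these are exactly the steps that the proof of Proposition \ref{prop:appro_bound} via \cite{lu2020} already supplies in the version without the leading factor $(\lam I+A_{\bX})^{1/2}$.
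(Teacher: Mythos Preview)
Your outline matches the paper's proof closely: the paper also factors out $\xi(A)(\lambda I+A)^{-1/2}$ to extract $\xi(\lambda)/\sqrt{\lambda}$, transfers $(\lambda I+A)^{1/2}$ to $(\lambda I+A_{\bX})^{1/2}$ at cost $\Xi^{1/2}$ via Cordes, and in the $\mathcal{F}(0,c)$ case splits $\vartheta(A)=\vartheta(A_{\bX})+(\vartheta(A)-\vartheta(A_{\bX}))$ with the Lipschitz estimate producing the second term exactly as you describe.

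One step deserves sharpening. In the operator monotone case you write ``transfer $\varphi(A)$ to $\varphi(A_{\bX})$ by operator monotonicity, paying $\Xi$, then apply Lemma~\ref{lem:prop3.1_sergei_book} to $r_\lambda(A_{\bX})\varphi(A_{\bX})$''. The paper never produces $\varphi(A_{\bX})$, and operator monotonicity alone does not give a comparison of the form $\varphi(A)\leq c\,\varphi(A_{\bX})$, because $A$ and $A_{\bX}$ are not ordered. Instead the paper inserts $(\lambda I+A)(\lambda I+A)^{-1}$ to the right of $r_\lambda(A_{\bX})$ and uses the scalar bound
\[
\norm{(\lambda I+A)^{-1}\varphi(A)\nu_q}_{\hk}\le c\,\frac{\varphi(\lambda)}{\lambda}\,\norm{\nu_q}_{\hk},
\]
valid because operator monotonicity of $\varphi$ forces $t/\varphi(t)$ to be non-decreasing (so $\varphi$ is covered by qualification~$1$). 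Only the remaining factor $(\lambda I+A)$ is transferred to $(\lambda I+A_{\bX})$ at cost $\Xi$, leaving $\norm{r_\lambda(A_{\bX})(\lambda I+A_{\bX})^{3/2}}\le c(\gamma_0+\gamma_{3/2})\lambda^{3/2}$. Your bookkeeping of the powers of $\Xi$, of $\lambda$, and of the constants is correct; only the mechanism behind the ``operator monotone transfer'' needs to be stated as this resolvent insertion rather than a direct replacement $\varphi(A)\mapsto\varphi(A_{\bX})$.
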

\begin{proof}
	The analysis below is based on a modification of arguments developed in \cite{lu2020} for estimating the $L_{2,p}$-norm of any function $f \in L_{2,p}$ in terms of $\norm{(J_p^* J_p)^{\frac12} f}_\hk$. For the reader's convenience, we present this modification in detail.
	
	First of all, directly from Lemma A.1 \cite{lu2020}, it follows that, if $g_\lambda$ is any regularization with qualification $1$, then  
	\begin{align}\label{eq:A1.2}
		\norm{r_\lam (\sxp^*\sxp ) (\lam I + \sxp^*\sxp)^\frac12}_{\hk \to \hk} \leq  \sqrt{(\gamma_0^2 + \gamma_{\frac{1}{2}}^2) } \lam^\frac12.
	\end{align}	
	If $g_\lam$ has qualification at least $3/2$ then
	\begin{align}\label{eq:A1.3}
		\norm{r_\lam (\sxp^*\sxp ) (\lam I + \sxp^*\sxp)^\frac32}_{\hk \to \hk} \leq \sqrt{8(\gamma_0^2 + \gamma_{\frac{3}{2}}^2) } \lam^\frac32.
	\end{align}

	If $\beta$ meets source condition (\ref{eq:smooth3.2.1}), then for any $\lam > 0$ 
	\begin{align}\label{eq:1st_proof_app_bound}
	&\norm{\xi (J^*_p J_p) (\beta - \bar{\beta}^\lam)}_\hk = \norm{\xi(J^*_pJ_p) (I - g_\lam (\sxp^* \sxp ) \sxp^* \sxp ) \beta  }_\hk \nonumber \\
	&\hspace*{1.2cm}= \norm{\xi(J^*_pJ_p) (I - g_\lam (x^* \sxp ) \sxp^* \sxp ) \varphi(J^*_p J_p) v }_\hk \nonumber \\
	&\hspace*{1.2cm}\leq \norm{\xi (J_p^*J_p) (\lam I + J_p^*J_p)^{-\frac{1}{2}}}_{\hk \rightarrow \hk} \norm{(\lam I + J_p^*J_p)^{\frac{1}{2}} r_\lam (\sxp^* \sxp )\varphi(J_p^*J_p) v}_{\hk}.
	\end{align}
	Now we are going to estimate each component on the right-hand side of (\ref{eq:1st_proof_app_bound}). From the proof of Lemma \ref{lem:N_infty_bound}, we have 
	\begin{align}\label{eq:1st_proof_app_bound_1}
		\norm{\xi (J_p^*J_p) (\lam I + J_p^*J_p)^{-\frac{1}{2}}}_{\hk \rightarrow \hk} \leq c \frac{\xi(\lam)}{\sqrt{\lam}}.
	\end{align}
	Observe also that 
	\begin{align*}
		&\norm{(\lam I + J_p^*J_p)^{\frac{1}{2}} r_\lam (\sxp^* \sxp )\varphi(J_p^*J_p) v}_{\hk} \nonumber\\
		&\hspace*{1.7cm}\leq \norm{(\lam I + J_p^*J_p)^{\frac{1}{2}} r_\lam (\sxp^* \sxp ) (\lam I + J_p^*J_p)}_{\hk \to \hk} \norm{(\lam I+ J_p^*J_p )^{-1}\varphi(J_p^*J_p) v}_{\hk}.
	\end{align*}
	Moreover, using (\ref{def:Xi}) and the bounds (\ref{eq:1/2Xi}) and (\ref{eq:A1.3}), we get
	\begin{align*}
		&\norm{(\lam I + J_p^*J_p)^{\frac{1}{2}} r_\lam (\sxp^* \sxp ) (\lam I + J_p^*J_p)}_{\hk \to \hk} \\ 
		&\hspace*{0.7cm} \leq  \norm{ (\lam I + J_p^*J_p)^{\frac{1}{2}} (\lam I + \sxp^* \sxp)^{-\frac{1}{2}} (\lam I + \sxp^* \sxp)^{\frac{1}{2}} r_\lam (\sxp^* \sxp ) (\lam I + J_p^*J_p)}_{\hk \to \hk} \\
		&\hspace*{0.7cm} \leq \Xi^{\frac12} \norm{ (\lam I + \sxp^* \sxp)^{\frac{1}{2}} r_\lam (\sxp^* \sxp ) (\lam I + \sxp^* \sxp) (\lam I + \sxp^* \sxp)^{-1} (\lam I + J_p^*J_p) }_{\hk \to \hk} \\
		&\hspace*{0.7cm} \leq \Xi^{\frac12} \norm{(\lam I + \sxp^* \sxp)^{\frac{1}{2}} r_\lam (\sxp^* \sxp ) (\lam I + \sxp^* \sxp) }_{\hk \to \hk} \Xi\\
		&\hspace*{0.7cm} = \norm{r_\lam (\sxp^* \sxp ) (\lam I + \sxp^* \sxp)^{\frac{3}{2}}}_{\hk \to \hk} \Xi^{\frac32} \\
		&\hspace*{0.7cm}  \leq c \Xi^{\frac32} \lam^\frac32  \left( \gamma_0 +    \gamma_\frac32  \right).
	\end{align*}
	Besides, using the same argument as in the proof of Lemma \ref{lem:N_infty_bound}, for an operator monotone index function $\varphi$ we have
	\begin{align}\label{ineq_varphi_v}
		\norm{(\lam I+ J_p^*J_p )^{-1}\varphi(J_p^*J_p) v}_{\hk} \leq c \frac{\varphi(\lam)}{\lam} \norm{v}_\hk.
	\end{align}
	Thus,
	\[
	\norm{(\lam I + J_p^*J_p)^{\frac{1}{2}} r_\lam (\sxp^* \sxp )\varphi(J_p^*J_p) v}_{\hk} \leq c \left( \gamma_0 +    \gamma_\frac32  \right) \Xi^{\frac32}  \lam^\frac12  \varphi (\lam).
	\]
	Substituting (\ref{eq:1st_proof_app_bound_1}) and the above estimate into (\ref{eq:1st_proof_app_bound}), we obtain the second bound of the proposition.

	Now we turn to proving the first bound and assume that $\beta$ meets (\ref{eq:smooth3.2.1}) with $\varphi = \vartheta \psi$. Then we have
	\begin{align}\label{1st_bound_of_prop5.1}
		\norm{\xi(J_p^*J_p) (\beta - \bar{\beta}^\lam )}_\hk \leq & \norm{ \xi(J^*_p J_p)  r_\lam (\sxp^* \sxp)\vartheta(\sxp^* \sxp) \psi (J^*_p J_p) v}_\hk \nonumber \\
		& +\norm{\xi(J_p^*J_p) r_\lam (\sxp^* \sxp) (\vartheta(J^*_p J_p) -\vartheta(\sxp^* \sxp) ) \psi(J^*_p J_p) v }_\hk.
	\end{align}
	Further, we estimate separately each term on the right-hand side of (\ref{1st_bound_of_prop5.1}). By using (\ref{eq:1st_proof_app_bound_1}) and (\ref{ineq_varphi_v}), the first term is estimated as follows:
	\begin{align} \label{term_1_of_1st_bound_prop5.1}
		&\norm{ \xi(J^*_p J_p)  r_\lam (\sxp^* \sxp)\vartheta(\sxp^* \sxp) \psi (J^*_p J_p) v}_\hk \nonumber \\
		& \hspace*{0.7cm} \leq \norm{\xi(J^*_p J_p) (\lam I+ J^*_p J_p )^{-\frac{1}{2}}}_{\hk \rightarrow \hk} \norm{(\lam I+ J^*_p J_p )^{\frac{1}{2}}r_\lam (\sxp^* \sxp)\vartheta(\sxp^* \sxp) \psi (J^*_p J_p) v}_\hk \nonumber \\
		& \hspace*{1.4cm} \times \norm{(\lam I+ J^*_p J_p )^{-1}\psi (J^*_p J_p) v}_{\hk} \nonumber\\
		& \hspace*{0.7cm} \leq c \frac{\xi(\lam)}{\sqrt{\lam}} \norm{(\lam I+ J^*_p J_p )^{\frac{1}{2}} r_\lam (\sxp^* \sxp)\vartheta(\sxp^* \sxp) (\lam I+ J^*_p J_p )}_{\hk \rightarrow \hk} \frac{\psi(\lam)}{\lam} \norm{v}_\hk \nonumber \\
		& \hspace*{0.7cm} \leq c \frac{\xi(\lam)\psi(\lam)}{{\lam}^\frac32} \Xi^{\frac12} \norm{ (\lam I + \sxp^* \sxp)^{\frac{1}{2}} r_\lam (\sxp^* \sxp)\vartheta(\sxp^* \sxp)(\lam I + \sxp^* \sxp) }_{\hk \to \hk} \Xi \nonumber\\
		&\hspace*{0.7cm} \leq c \frac{\xi(\lam)\psi(\lam)}{{\lam}^\frac32}  \Xi^{\frac32} \vartheta (\lam) \lam^\frac32 \nonumber\\
		&\hspace*{0.7cm} \leq c \xi(\lam)\varphi(\lam) \Xi^{\frac32},
	\end{align}
	and with the use of (\ref{eq:A1.2}) we can estimate the second term in (\ref{1st_bound_of_prop5.1}) as
	\begin{align}\label{term_2_of_1st_bound_prop5.1}
		&\norm{\xi(J_p^*J_p) r_\lam (\sxp^* \sxp) (\vartheta(J^*_p J_p) -\vartheta(\sxp^* \sxp) ) \psi(J^*_p J_p) v }_\hk \nonumber\\
		& \hspace*{0.5cm} \leq \norm{\xi (J_p^*J_p) (\lam I + J_p^*J_p)^{-\frac{1}{2}} (\lam I + J_p^*J_p)^{\frac{1}{2}} r_\lam (\sxp^* \sxp) (\vartheta(J^*_p J_p) -\vartheta(\sxp^* \sxp) ) \psi(J^*_p J_p) v }_\hk \nonumber\\
		& \hspace*{0.5cm} \leq c \frac{\xi(\lam)}{\sqrt{\lam}} \norm{ (\lam I + J_p^*J_p)^{\frac{1}{2}} r_\lam (\sxp^* \sxp)}_{\hk \to \hk}  \norm{(\vartheta(J^*_p J_p) -\vartheta(\sxp^* \sxp) )  }_{\hk \to \hk} \nonumber\\
		& \hspace*{0.5cm} \leq c \frac{\xi(\lam)}{\sqrt{\lam}} \norm{ (\lam I + J_p^*J_p)^{\frac{1}{2}} r_\lam (\sxp^* \sxp) }_{\hk \rightarrow \hk} \norm{J^*_p J_p - \sxp^* \sxp }_{\hk \rightarrow \hk} \psi(\kappa_0) \norm{v}_\hk \nonumber\\
		& \hspace*{0.5cm} \leq c \frac{\xi(\lam)}{\sqrt{\lam}} \Xi^{\frac12} \norm{(\lam I + \sxp^* \sxp)^{\frac{1}{2}} r_\lam (\sxp^* \sxp) }_{\hk \rightarrow \hk} \norm{J^*_p J_p - \sxp^* \sxp }_{\hk \rightarrow \hk} \nonumber\\
		& \hspace*{0.5cm} \leq c \xi(\lam) \Xi^{\frac12} \left( \gamma_0 + \gamma_\frac12 \right) \norm{J^*_p J_p - \sxp^* \sxp }_{\hk \rightarrow \hk}.
	\end{align}
	Substituting (\ref{term_1_of_1st_bound_prop5.1}) and (\ref{term_2_of_1st_bound_prop5.1}) into (\ref{1st_bound_of_prop5.1}), we obtain 
	\begin{align*}
		\norm{\xi(J_p^*J_p) (\beta - \bar{\beta}^\lam )}_\hk \leq c  \xi(\lam)  \left( \Xi^{\frac32}\varphi(\lam) +  ( \gamma_0 + \gamma_\frac12 ) \Xi^{\frac12} \norm{J^*_p J_p - \sxp^* \sxp }_{\hk \rightarrow \hk}  \right).
	\end{align*}
\end{proof}

\begin{proposition}\label{prop:pw_propa_bound}
	Assume that Assumption \ref{assum:source_cond_kernel} be satisfied. Then it holds
	$$\norm{\xi(J_p^*J_p) (\bar{\beta}^\lam - \beta_{\mathbf{X}}^\lam)}_\hk \leq c \frac{\xi(\lam)}{\sqrt{\lam}} \Xi (\gamma_{-1} + \gamma_0 + 1) \norm{(\lam I + J_p^*J_p)^{-\frac12} (\sxq^* \sxq \mathbf{1} - \sxp^* \sxp \beta )}_\hk. $$
\end{proposition}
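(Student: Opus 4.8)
The plan is to exploit the fact that the difference $\bar{\beta}^\lam - \beta_{\mathbf{X}}^\lam$ has the explicit form $g_\lam(\sxp^*\sxp)(\sxp^*\sxp\beta - \sxq^*\sxq\mathbf{1})$, read off directly from the definitions (\ref{eq:beta_bar}) and (\ref{eq:beta_g_lam}). Writing $D := \sxp^*\sxp\beta - \sxq^*\sxq\mathbf{1}$ and inserting the factor $(\lam I + J_p^*J_p)^{\frac12}(\lam I + J_p^*J_p)^{-\frac12}$ next to $D$, I would split
$$\norm{\xi(J_p^*J_p)(\bar{\beta}^\lam - \beta_{\mathbf{X}}^\lam)}_\hk \leq \norm{\xi(J_p^*J_p)g_\lam(\sxp^*\sxp)(\lam I + J_p^*J_p)^{\frac12}}_{\hk\to\hk}\, \norm{(\lam I + J_p^*J_p)^{-\frac12} D}_\hk,$$
so that the second factor already matches the quantity on the right-hand side of the claimed bound (and is unchanged under the harmless sign flip to $\sxq^*\sxq\mathbf{1} - \sxp^*\sxp\beta$). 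It then remains only to control the operator norm in the first factor by $c\,\xi(\lam)\lam^{-\frac12}\Xi(\gamma_{-1}+\gamma_0+1)$.

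To do this I would peel off the smoothing factor $\xi(J_p^*J_p)(\lam I + J_p^*J_p)^{-\frac12}$ at the front, whose norm is bounded by $c\,\xi(\lam)/\sqrt{\lam}$ exactly as in (\ref{eq:1st_proof_app_bound_1}). The remaining operator $(\lam I + J_p^*J_p)^{\frac12}g_\lam(\sxp^*\sxp)(\lam I + J_p^*J_p)^{\frac12}$ I would sandwich between $(\lam I + \sxp^*\sxp)^{\pm\frac12}$, producing three factors
$$\underbrace{(\lam I + J_p^*J_p)^{\frac12}(\lam I + \sxp^*\sxp)^{-\frac12}}_{\leq\,\Xi^{1/2}}\ \underbrace{(\lam I + \sxp^*\sxp)^{\frac12}g_\lam(\sxp^*\sxp)(\lam I + \sxp^*\sxp)^{\frac12}}_{(\ast)}\ \underbrace{(\lam I + \sxp^*\sxp)^{-\frac12}(\lam I + J_p^*J_p)^{\frac12}}_{\leq\,\Xi^{1/2}},$$
where the two outer factors are both bounded by $\Xi^{1/2}$ via (\ref{eq:1/2Xi}) (the third factor being the adjoint of the first), giving the product $\Xi$.

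The central factor $(\ast)$ is a scalar function of the single self-adjoint operator $\sxp^*\sxp$, so by the spectral theorem its norm equals $\sup_{0<t\leq c}(\lam+t)|g_\lam(t)|$. Here I would split $(\lam+t)|g_\lam(t)| = \lam|g_\lam(t)| + t|g_\lam(t)|$ and bound the two pieces separately: the first by $\gamma_{-1}$ using the third line of (\ref{eq:gen_reg3.1.1}), and the second by noting $t|g_\lam(t)| = |1-r_\lam(t)| \leq 1+\gamma_0$ from the definition of the residual together with the first line of (\ref{eq:gen_reg3.1.1}). This yields the bound $\gamma_{-1}+\gamma_0+1$ for $(\ast)$. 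Collecting the three estimates gives the factor $\Xi(\gamma_{-1}+\gamma_0+1)$, which combined with $c\,\xi(\lam)/\sqrt{\lam}$ establishes the claim.

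The only nonroutine step is the treatment of $(\ast)$: the bound $\sqrt{t}|g_\lam(t)| \leq \gamma_{-\frac12}/\sqrt{\lam}$ does not directly control $t|g_\lam(t)|$ uniformly in $t$, and the clean estimate comes only from recognizing $t g_\lam(t) = 1 - r_\lam(t)$ and reading off $|t g_\lam(t)|\le 1+\gamma_0$. Everything else is a standard chain of operator-norm factorizations and appeals to the Cordes inequality already used in (\ref{eq:1/2Xi}).
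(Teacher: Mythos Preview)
Your proof is correct and follows essentially the same route as the paper's: both peel off $\xi(J_p^*J_p)(\lam I+J_p^*J_p)^{-\frac12}$ via (\ref{eq:1st_proof_app_bound_1}), sandwich $(\lam I+J_p^*J_p)^{\frac12}g_\lam(\sxp^*\sxp)(\lam I+J_p^*J_p)^{\frac12}$ between two $\Xi^{1/2}$ factors, and bound the central commuting piece by $\sup_t(\lam+t)|g_\lam(t)|\le\gamma_{-1}+\gamma_0+1$ using exactly your split $\lam|g_\lam(t)|+|1-r_\lam(t)|$. The only cosmetic difference is that the paper writes that central operator as $g_\lam(\sxp^*\sxp)(\lam I+\sxp^*\sxp)$ rather than the symmetric $(\lam I+\sxp^*\sxp)^{\frac12}g_\lam(\sxp^*\sxp)(\lam I+\sxp^*\sxp)^{\frac12}$, which are equal since the factors commute.
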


\begin{proof}
	Using (\ref{eq:smooth3.2.1}), (\ref{def:Xi}) and (\ref{eq:1st_proof_app_bound_1}), we derive
	\begin{align*}
		&\norm{\xi(J_p^*J_p) (\bar{\beta}^\lam - \beta_{\mathbf{X}}^\lam)}_\hk \leq  \norm{\xi(J_p^*J_p) g_\lam (\sxp^* \sxp) (\sxq^* \sxq \mathbf{1} - \sxp^* \sxp \beta )}_\hk \\
		&\hspace*{1.5cm}\leq \norm{\xi(J_p^*J_p) (\lam I + J_p^*J_p)^{-\frac12}}_{\hk \rightarrow \hk} \times \\
		&\hspace*{5.4cm} \times \norm{(\lam I + J_p^*J_p)^{\frac12} g_\lam (\sxp^* \sxp) (\sxq^* \sxq \mathbf{1} - \sxp^* \sxp \beta )}_\hk\\
		&\hspace*{1.5cm}\leq c \frac{\xi(\lam)}{\sqrt{\lam}} \norm{(\lam I + J_p^*J_p)^{\frac12} g_\lam (\sxp^* \sxp) (\lam I + J_p^*J_p)^{\frac12} }_{\hk \to \hk }\times \\
		&\hspace*{5.8cm} \times  \norm{(\lam I + J_p^*J_p)^{-\frac12} (\sxq^* \sxq \mathbf{1} - \sxp^* \sxp \beta )}_\hk\\
		&\hspace*{1.5cm}\leq c \frac{\xi(\lam)}{\sqrt{\lam}} \Xi^\frac12  \norm{ g_\lam (\sxp^* \sxp) (\lam I +\sxp^* \sxp) }_\hk \Xi^\frac12 \times \\
		&\hspace*{5.8cm} \times \norm{(\lam I + J_p^*J_p)^{-\frac12} (\sxq^* \sxq \mathbf{1} - \sxp^* \sxp \beta )}_\hk \\
		&\hspace*{1.5cm}\leq c \frac{\xi(\lam)}{\sqrt{\lam}} \Xi \sup_t \big| g_\lam (t) (\lam + t) \big| \norm{(\lam I + J_p^*J_p)^{-\frac12} (\sxq^* \sxq \mathbf{1} - \sxp^* \sxp \beta )}_\hk \\
		&\hspace*{1.5cm}\leq c \frac{\xi(\lam)}{\sqrt{\lam}} \Xi (\gamma_{-1} + \gamma_0 + 1) \norm{(\lam I + J_p^*J_p)^{-\frac12} (\sxq^* \sxq \mathbf{1} - \sxp^* \sxp \beta )}_\hk.
	\end{align*}
\end{proof}

Now we can combine (\ref{eq:pw_decom_bound}) with Propositions \ref{prop:pw_appro_bound}, \ref{prop:pw_propa_bound} and with Lemma \ref{lem1}. Then the same argument as in the proof of Proposition \ref{prop:combine_app_propa_bound} gives us the following statement.

\begin{theorem}\label{thrm5.3}
		Under the assumption of Propositions \ref{prop:pw_appro_bound} and \ref{prop:pw_propa_bound}, for $\lambda > \lam_*$ with probability at least $1 - \delta$, for all $x \in \bX$, we have 
			\begin{align*}
			|\beta (x) - \beta^\lam_{\mathbf{X}} (x) |  \leq c \xi (\lam) \left( \varphi(\lam) +  (m^{-\frac12} + n^{-\frac12}) \frac{\xi (\lam)}{\lam} \right) \left( \log \frac{2}{\delta}\right)^2,
		\end{align*}
	and for $\lam = \lambda_{m,n} = \theta_{\varphi,\xi}^{-1} (m^{-\frac12} + n^{-\frac12}),$
	\[ |\beta (x) - \beta^\lam_{\mathbf{X}} (x) |  \leq c \xi (\lam_{m,n}) \varphi(\lam_{m,n}) \log^2 \frac{1}{\delta}.\]
\end{theorem}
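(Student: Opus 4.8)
The plan is to substitute Propositions~\ref{prop:pw_appro_bound} and~\ref{prop:pw_propa_bound} into the pointwise decomposition~(\ref{eq:pw_decom_bound}) and then repeat, with the extra prefactor $\xi(\lam)$ carried along, the bookkeeping that produced Proposition~\ref{prop:combine_app_propa_bound} out of Propositions~\ref{prop:appro_bound} and~\ref{prop:propa_bound}. First I would bound the approximation part $\norm{\xi(J_p^*J_p)(\beta-\bar\beta^\lam)}_\hk$ by Proposition~\ref{prop:pw_appro_bound} and the noise part $\norm{\xi(J_p^*J_p)(\bar\beta^\lam-\beta_{\mathbf{X}}^\lam)}_\hk$ by Proposition~\ref{prop:pw_propa_bound}; into the latter I would then insert Lemma~\ref{lem1} to control the residual $\norm{(\lam I+J_p^*J_p)^{-1/2}(\sxq^*\sxq\mathbf{1}-\sxp^*\sxp\beta)}_\hk$ by a term of order $(1+\sqrt{2\log(2/\delta)})\sqrt{\mathcal{N}_\infty(\lam)}\,(m^{-1/2}+n^{-1/2})$.

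The decisive simplification is Lemma~\ref{lem:N_infty_bound}, which replaces $\sqrt{\mathcal{N}_\infty(\lam)}$ by $c\,\xi(\lam)/\sqrt{\lam}$. In the noise term this newly produced factor $\xi(\lam)/\sqrt{\lam}$ multiplies the prefactor $\xi(\lam)/\sqrt{\lam}$ already present in Proposition~\ref{prop:pw_propa_bound}, and the two combine into $\xi^2(\lam)/\lam$, which is exactly $\xi(\lam)\cdot(m^{-1/2}+n^{-1/2})\,\xi(\lam)/\lam$, the second summand in the claim; the approximation term of Proposition~\ref{prop:pw_appro_bound} already supplies the first summand $\xi(\lam)\varphi(\lam)$.

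It remains to show that the data-dependent factors $\Xi,\Xi^{1/2},\Xi^{3/2}$ together with the constants $\gamma_0,\gamma_{1/2},\gamma_{3/2},\gamma_{-1}$ are harmless for $\lam\ge\lam_*$. Here I would invoke Lemma~\ref{lem4.6}: for $\lam\ge\lam_*$ the quantity $\Upsilon(\lam)$ is bounded by the absolute constant $1+(4\kappa_0^2+2\kappa_0)^2$, and since~(\ref{def:Xi}) gives $\Xi\le 2\Upsilon(\lam)(\log(2/\delta))^2$ whenever $\log(2/\delta)\ge 1$, every power of $\Xi$ collapses into a constant times a power of $\log(2/\delta)$, all of which I would absorb into the generic $c$ and into the displayed $(\log(2/\delta))^2$ factor, exactly as in Proposition~\ref{prop:combine_app_propa_bound}. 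In the $\mathcal{F}(0,c)$ case the additional summand $c\,\xi(\lam)\psi(\kappa_0)\norm{J_p^*J_p-\sxp^*\sxp}_{\hk\to\hk}$ of Proposition~\ref{prop:pw_appro_bound} is handled by~(\ref{1st_bound_of_3}); it contributes a term of order $\xi(\lam)n^{-1/2}\log(2/\delta)$, which for the admissible index functions (where $\xi(\lam)/\lam\ge c$, e.g. $\xi(t)=t^\varsigma$ with $\varsigma\le 1/2$ and $\lam\le 1$) is dominated by the noise summand and absorbed. Collecting everything yields the first displayed inequality.

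For the second inequality I would set $\lam=\lam_{m,n}=\theta_{\varphi,\xi}^{-1}(m^{-1/2}+n^{-1/2})$. Since $\theta_{\varphi,\xi}(t)=\varphi(t)\,t/\xi(t)$, the defining relation $\theta_{\varphi,\xi}(\lam_{m,n})=m^{-1/2}+n^{-1/2}$ rearranges to $(m^{-1/2}+n^{-1/2})\,\xi(\lam_{m,n})/\lam_{m,n}=\varphi(\lam_{m,n})$, so the two summands inside the parentheses of the first bound coincide and the total collapses to $c\,\xi(\lam_{m,n})\varphi(\lam_{m,n})\log^2(1/\delta)$. The one prerequisite is that $\lam_{m,n}\ge\lam_*$, so that the first bound actually applies at $\lam=\lam_{m,n}$; this is the very inequality already verified for Theorem~\ref{thm4.5} by means of Lemmas~\ref{lem:N_infty_bound} and~\ref{lem4.6} (see the discussion around Figure~\ref{fig_lam}), and the same verification transfers verbatim. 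I expect the genuine obstacle to lie precisely in this uniform-in-$\lam$ control of $\Xi$: unless Lemma~\ref{lem4.6} pins $\Upsilon(\lam)$ to an absolute constant on $\lam\ge\lam_*$, the factor $\Xi^{3/2}$ carried by the pointwise approximation bound would spoil the rate, whereas the remaining logarithmic bookkeeping is routine once this is secured.
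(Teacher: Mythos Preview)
Your proposal is correct and follows exactly the route the paper takes: the paper's own proof consists of the single sentence that one combines the decomposition~(\ref{eq:pw_decom_bound}) with Propositions~\ref{prop:pw_appro_bound}, \ref{prop:pw_propa_bound} and Lemma~\ref{lem1}, and then repeats the argument of Proposition~\ref{prop:combine_app_propa_bound}. Your write-up fills in precisely these steps---inserting Lemma~\ref{lem:N_infty_bound} to convert $\sqrt{\mathcal{N}_\infty(\lam)}$ into $\xi(\lam)/\sqrt{\lam}$, invoking Lemma~\ref{lem4.6} to freeze $\Xi$ on $\lam\ge\lam_*$, absorbing the stray $\xi(\lam)n^{-1/2}$ term via $\xi(\lam)/\lam\ge c$, and balancing with $\theta_{\varphi,\xi}$---and the verification that $\lam_{m,n}\ge\lam_*$ is, as you note, the same one deferred to Figure~\ref{fig_lam} in Remark~\ref{rem_sec4}.
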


\begin{remark}\label{rem5.4}
 
 Let us consider the same index functions $\varphi (t) = t^\eta$ and $\xi (t)= t^\varsigma$ as in Remark \ref{rem_sec4}, where the accuracy of order $O \left((m^{-\frac12} + n^{-\frac12})^\frac{\eta }{\eta +1 - \varsigma } \right)$ has been derived for (\ref{eq:beta_g_lam}). Under the same assumptions, Theorem \ref{thrm5.3} guarantees the accuracy of order $O \left((m^{-\frac12} + n^{-\frac12})^\frac{\eta +\varsigma}{\eta +1 - \varsigma } \right)$.  This illustrates that the reconstruction of the Radon-Nikodym derivative at any particular point can be done with much higher accuracy than its reconstruction as an element of RKHS. But let us stress that the above high order of accuracy is guaranteed when the qualification $s$ of the used regularization scheme is higher than that of the Lavrentiev regularization or KuLSIF (\ref{eq:beta_tilde}).
 

\end{remark}

\section{Numerical illustrations}

In our examples, we simulate inputs $X_p = (x_1,x_2,\ldots,x_n)$ to be sampled from the normal distribution $p \sim N(2, 5)$, while the inputs $X_q = (x_1',x_2',\ldots,x_m')$ are sampled from the normal distribution $q \sim N (\mu_q,0.5)$ with $\mu_q = \{2,3,4\}$. In this case, the Radon-Nikodym derivative $\beta = \frac{dq}{dp}$ is known to be $$\beta(x) =  \sqrt{10} e^{\frac{(x-2)^2 - 10(x - \mu_q)^2}{10}} .$$


In the algorithms described in Section 3, we choose the kernel as 
\begin{align*}
	K(x, x' ) = 1 + e^{- \frac{(x-x')^2}{2}},
\end{align*}
which is a combination of a universal Gaussian kernel with a constant such that the corresponding space $\mathcal{H}_K$ contains all constant functions.

We are going to illustrate that to achieve high order of accuracy for reconstruction of the Radon-Nikodym derivative at any particular point, as it is guaranteed by Theorem \ref{thrm5.3}, one needs to employ a regularization with the qualification that is higher than 1. For doing this we use a particular case of the general regularization scheme (\ref{eq:beta_g_lam}), namely the iterated Lavrentiev regularization, to compute the values of the approximate Radon-Nikodym derivative $\beta_{\mathbf{X}}^\lam = (\beta_1^\lam,\beta_2^\lam,\cdots,\beta_n^\lam)$ with $\beta_i^\lambda = \beta_\bX^\lam (x_i)$. 

Recall that the $k$ times iterated Lavrentiev regularization is indexed by the functions

\begin{align}\label{eq6.1.1}
	g_\lambda (t) = g_{\lambda, k} (t) = \left( 1 - \frac{\lambda^k}{(\lambda + t)^k}\right) t^{-1},
\end{align}
and has the qualification $s=k$. 

For $g_\lambda (t) = g_{\lambda, k} (t) $ the vector of values of the approximate Radon-Nikodym derivative $\beta^\lambda_{\mathbf{X}} = \beta^{\lambda,k}_{\mathbf{X}} $ given by (\ref{eq:beta_g_lam}), (\ref{eq6.1.1}) is the $k$-th term of the sequence 
\begin{align}\label{eq6.1.2}
		\beta^{\lambda,l}_{\mathbf{X},0} &= 0,\nonumber \\
	\beta^{\lambda,l}_{\mathbf{X}} &= \left( n \lambda \mathbf{I} + \mathbf{K} \right)^{-1} \left( n \lambda \beta^{\lambda,l-1}_{\mathbf{X}}  + \bar{F} \right),\hspace*{0.5cm} l = 1,2,\ldots,k.	
\end{align}
where $\mathbf{I}$ is $n$ by $n$ identity matrix, $\mathbf{K}  = \left( K(x_i,x_j)\right)_{i,j = 1}^n$, and $\bar{F} = (F_i)_{i=1}^n$ with $ F_i = \frac{n}{m} \sum_{j=1}^m K(x_i,x_j').$

The algorithm (\ref{eq6.1.2}) has been implemented with $ m = n = 100$ and $k = \{1,2,3,5,10\}$. The regularization parameter $\lambda$ is chosen by the so-called quasi-optimality criterion (see, for example, \cite{bauer_2008}, \cite{Kindermann_2018}), $\bar{\lam} \in \{\lam_\iota = \lambda_0 \varrho^\iota, \iota = 1,2,\ldots,w \}, \varrho < 1$ such that for $\bar{\lam} = \lam_{\iota_0},$ 
$$\norm{\beta^{\lam_{\iota_0}}_\bX - \beta^{\lam_{\iota_0 - 1}}_\bX}_{\R^n} = \min \left\{  \norm{\beta^{\lam_{\iota}}_\bX - \beta^{\lam_{\iota - 1}}_\bX}_{\R^n}, \iota = 1,2,\ldots,w \right\}.$$
Taking into consideration Theorem \ref{thrm5.3} and Figure \ref{fig_lam}, one can expect that $\bar{\lambda} \approx \lam_{m,n} > (m^{-\frac{1}{2}}+n^{-\frac{1}{2}})$. Therefore, for $n=m=100$ it is natural to look for $\bar{\lambda}$ within interval $[0.1,0.9]$, and in our experiments we choose $\lambda_0 = 0.9$, $\varrho = \sqrt[9]{\frac19}$, and $w = 9$, such that $\lam_{\iota} \in [0.1,0.9]$.
\begin{figure}[!ht]%
	\centering
	\subfloat[]{\includegraphics[scale=0.135]{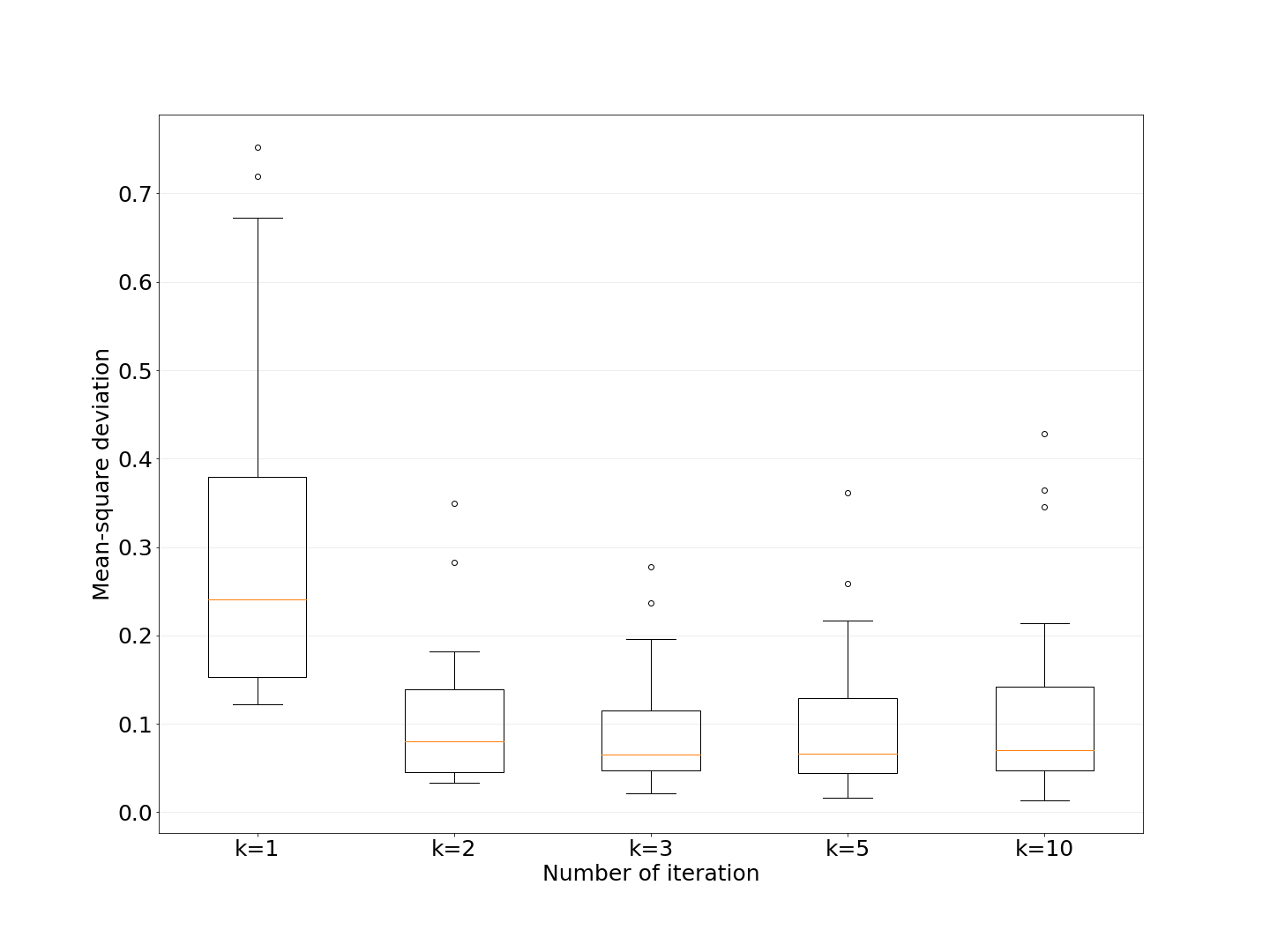}\label{fig1:a}}%
	\subfloat[]{\includegraphics[scale=0.135]{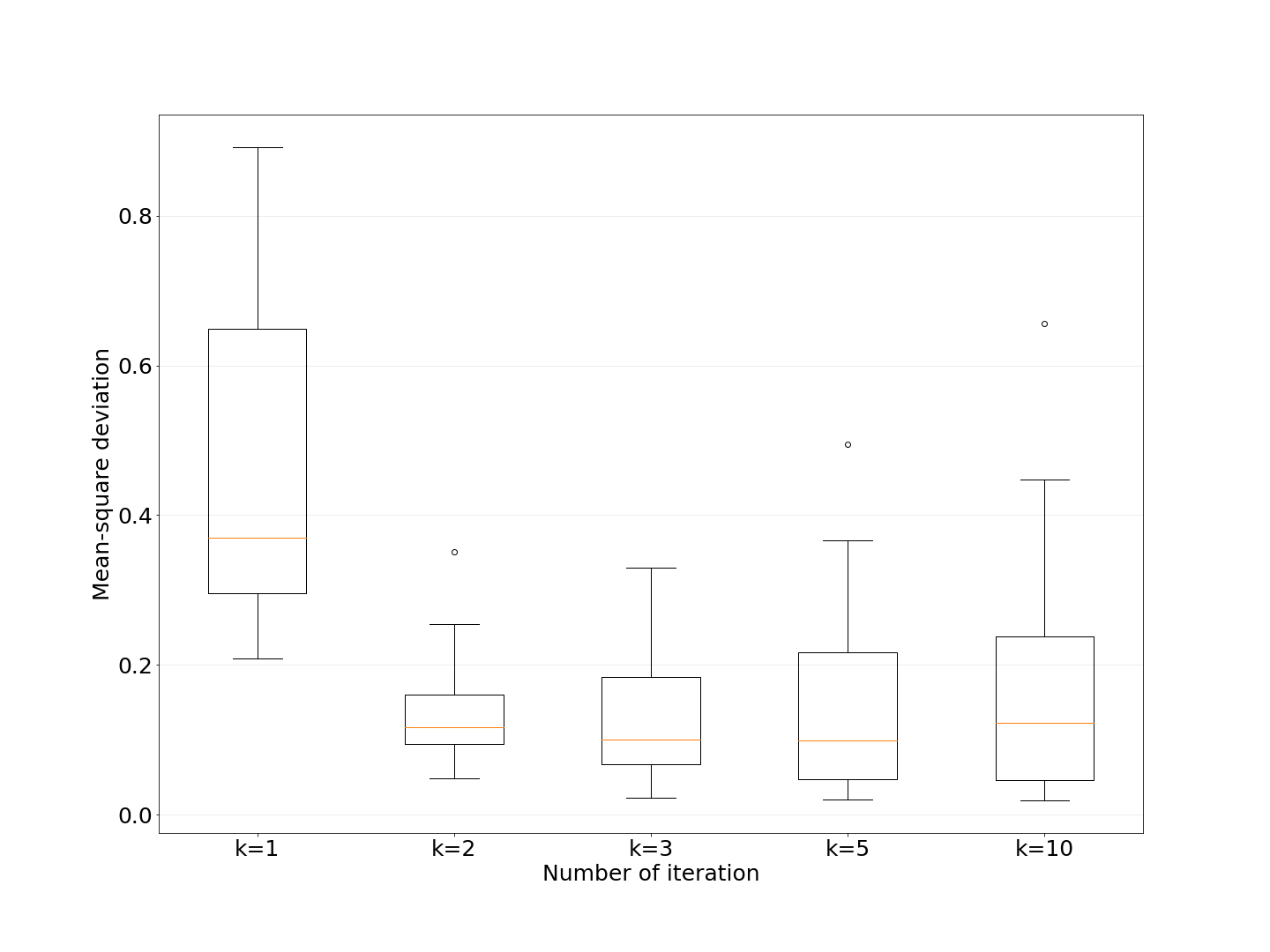}\label{fig1:b}}\\
	\subfloat[]{\includegraphics[scale=0.135]{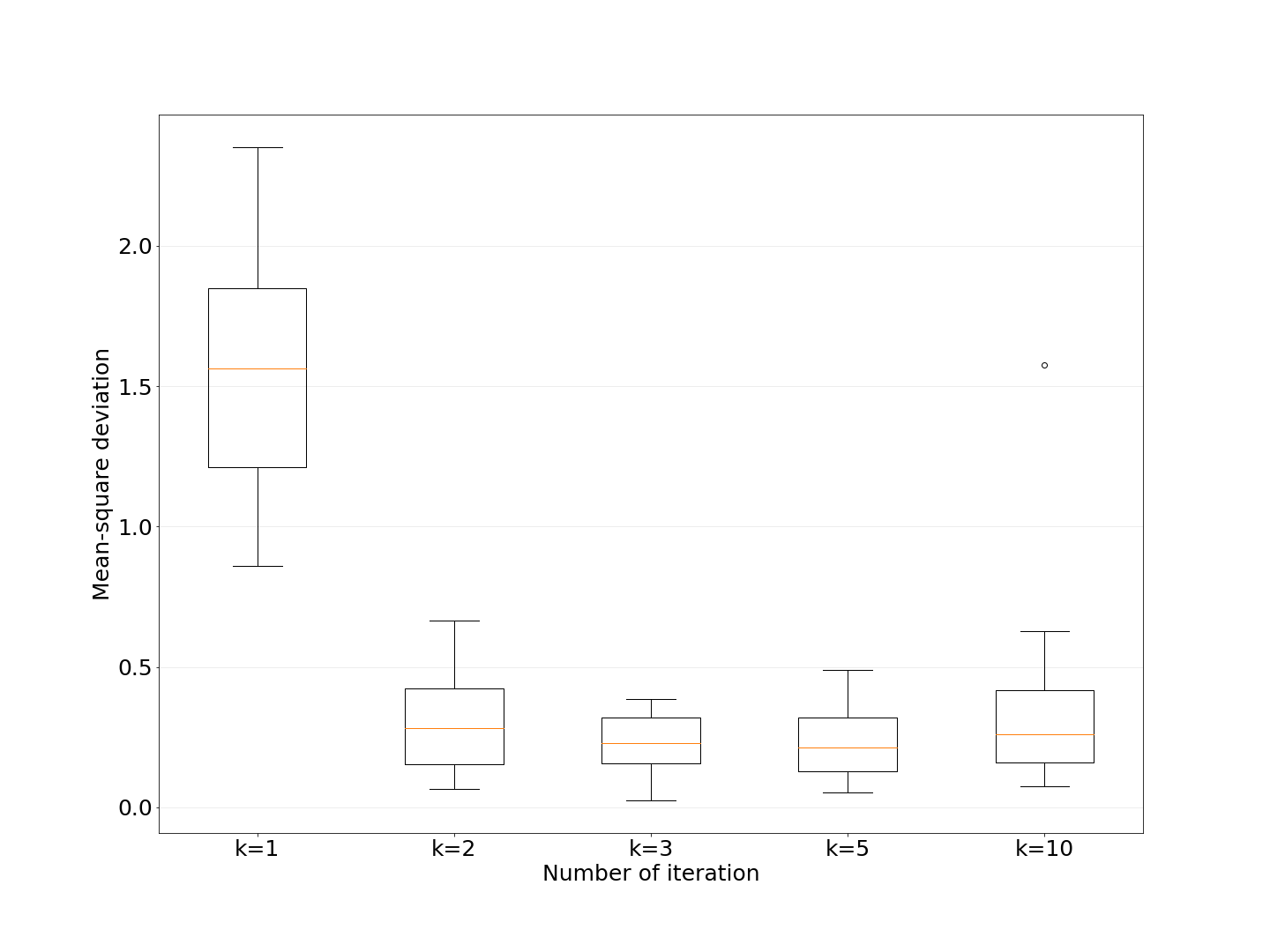}\label{fig1:c}}%
	\caption{Mean-square deviation in examples with (a) $X_q \sim N(2,0.5)$, (b) $X_q \sim N(3,0.5)$, and (c) $X_q \sim N(4,0.5)$.}%
	\label{fig1}%
\end{figure}

The performance of each implementation has been measured in terms of the mean-square deviation (MSD).
$$MSD =  n^{-1} \sum_{i=1}^n  \left( \beta (x_i) - \beta^{\lambda,k}_{\mathbf{X}} (x_i)\right)^2.$$

A summary of the performance over 20 simulations of $(x_i)_{i=1}^n$, $(x_j')_{j=1}^m$ in all cases $\mu_q = \{2,3,4\}$ is presented in the form of box plots in Figure \ref{fig1}. It can be clear seen that in our examples the considered realization of the iterated Laventiev regularization outperforms its original version $(k=1)$. This supports a conclusion from Theorem \ref{thrm5.3} suggesting the use of high qualification regularization for pointwise evaluation of Radon-Nikodym derivation.

The performance of the algorithm (\ref{eq6.1.2}) for a particular simulation is displayed in Figure \ref{fig2}. In this figure, the exact values $\beta$ are shown by the line, and the $\beta^{\lambda,1}_{\mathbf{X}} (x_i)$, $\beta^{\lambda,2}_{\mathbf{X}} (x_i)$, $\beta^{\lambda,3}_{\mathbf{X}} (x_i)$, $\beta^{\lambda,5}_{\mathbf{X}} (x_i)$, and $\beta^{\lambda,10}_{\mathbf{X}} (x_i)$ are denoted correspondingly by green triangles, red squares, cyan diamonds, yellow stars, and blue crosses.

\begin{figure}[!h]%
	\centering
	\subfloat[]{\includegraphics[scale=0.18]{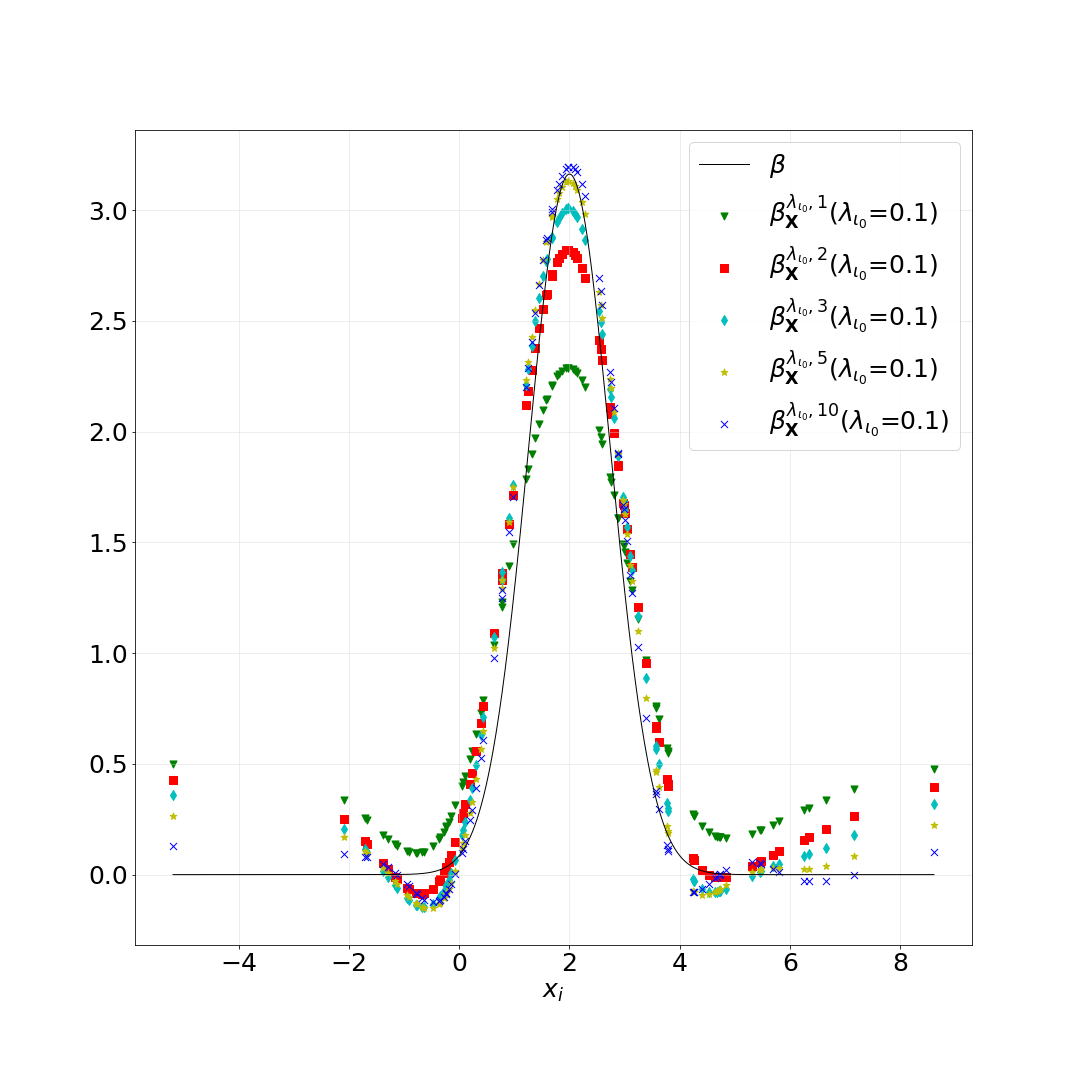}\label{fig2:a}}%
	\subfloat[]{\includegraphics[scale=0.18]{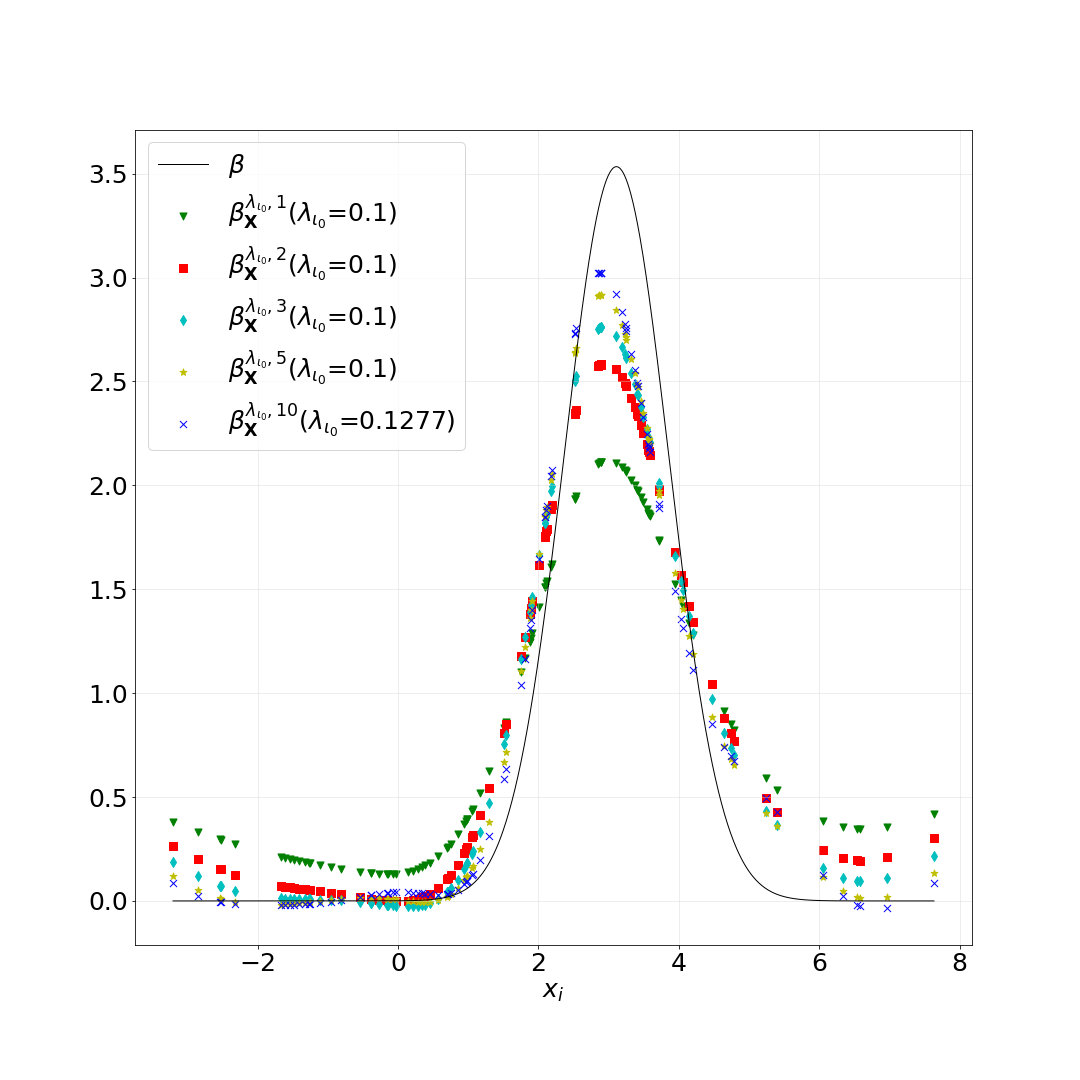}\label{fig2:b}}\\
	\subfloat[]{\includegraphics[scale=0.18]{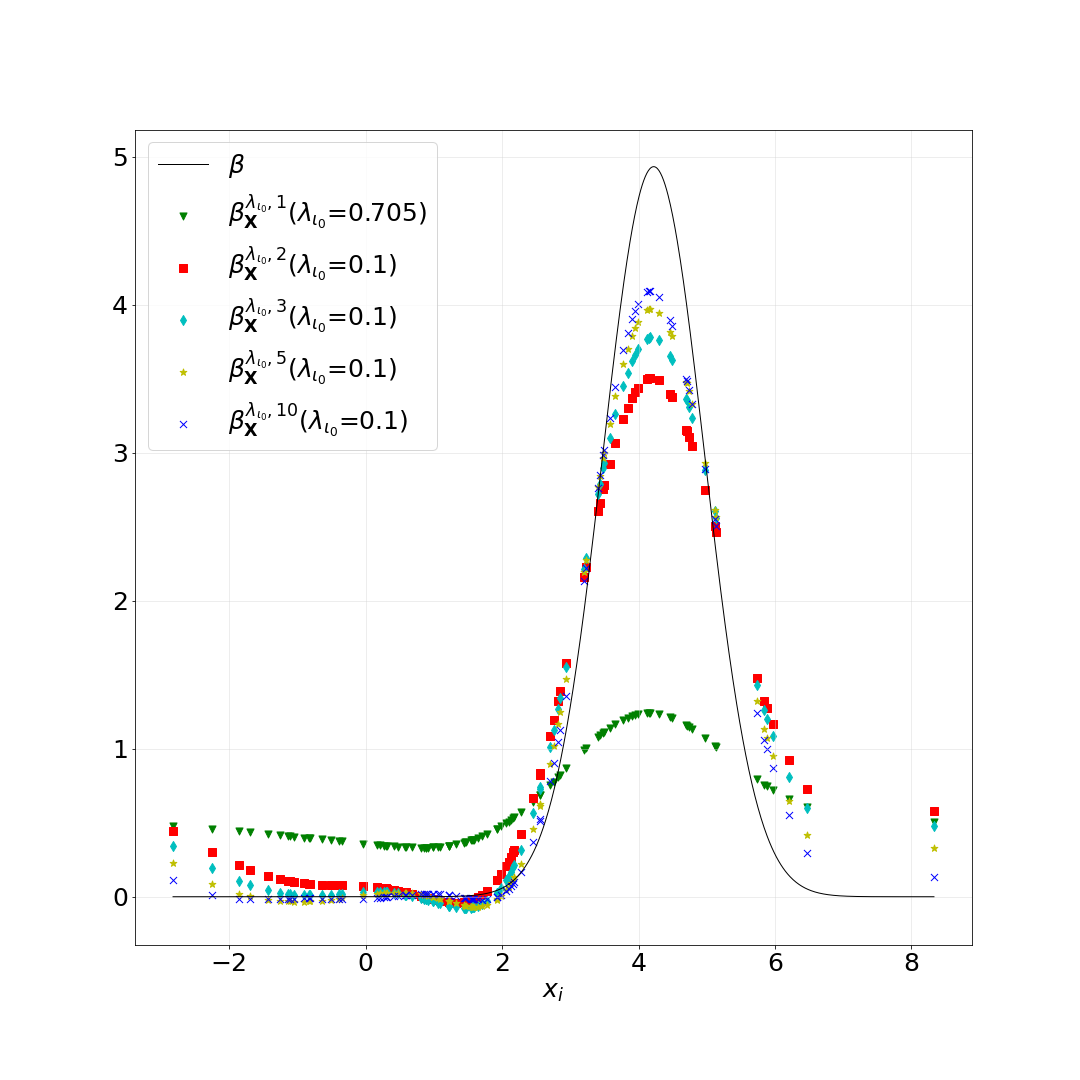}\label{fig2:c}}%
	\caption{The performance of the algorithm for a particular simulation with (a) $X_q \sim N(2,0.5)$, (b) $X_q \sim N(3,0.5)$, and (c) $X_q \sim N(4,0.5)$.}%
	\label{fig2}%
\end{figure}


\acks{The research reported in this paper has been funded by the Federal Ministry for Climate Action, Environment, Energy, Mobility, Innovation and Technology (BMK), the Federal Ministry for Digital and Economic Affairs (BMDW), and the Province of Upper Austria in the frame of the COMET--Competence Centers for Excellent Technologies Programme and the COMET Module S3AI managed by the Austrian Research Promotion Agency FFG.}











\vskip 0.2in
\bibliography{references}

\end{document}